\documentclass[twoside, 11pt]{article}
\usepackage{fancyhdr}
\usepackage{titlesec}
\usepackage{cite}
\usepackage{ifthen}
\usepackage{amsthm,amsmath,amsfonts}
\usepackage{indentfirst}

\titleformat{\section}{\large\bfseries}{\arabic{section}}{1em}{}
\newboolean{first}
\setboolean{first}{true}

\textheight 205mm \textwidth 145mm \setlength{\oddsidemargin}{5.6mm}
\setlength{\evensidemargin}{5.6mm}  \pagestyle{myheadings}

\renewcommand\baselinestretch{1.2}

\newtheorem{thm}{Theorem}[section]
\newtheorem{rem}{Remark}[section]
\newtheorem{lem}{Lemma}[section]
\newtheorem{cor}{Corollary}[section]

\newtheorem{defn}{Definition}[section]

\theoremstyle{definition}
 \numberwithin{equation}{section}

\begin{document}

\setlength\abovedisplayskip{2pt}
\setlength\abovedisplayshortskip{0pt}
\setlength\belowdisplayskip{2pt}
\setlength\belowdisplayshortskip{0pt}

\vspace{8true mm}

\renewcommand{\baselinestretch}{1.6}\baselineskip 15pt

\begin{center}
\noindent{\LARGE\bf
Sharp Estimates for $p$-Adic Hardy, Hardy-Littlewood-P\'olya
Operators and Commutators}

{\normalsize  Zunwei Fu$^1$, Qingyan Wu$^1$ and Shanzhen Lu$^2$
\noindent\footnotetext{\baselineskip 10pt This work was partially supported by
 NSF of China (Grant Nos. 10871024, 10901076 and 10931001), NSF of Shandong Province
 (Grant Nos. Q2008A01 and ZR2010AL006) and the Key Laboratory of Mathematics and Complex System (Beijing Normal
University), Ministry of Education, China.)}}
\end{center}
 \vspace{0.2 true cm}

\begin{center}
\renewcommand{\baselinestretch}{1.3}\baselineskip 12pt
\noindent{\footnotesize\rm  $^1$ Department of Mathematics, Linyi Universtiy, Linyi 276005, P. R. China\\
E-mail: lyfzw@tom.com \quad wuqingyan@lyu.edu.cn\\
 $^2$ School of Mathematical Sciences, Beijing Normal University, Beijing 100875, P. R. China\\
E-mail: lusz@bnu.edu.cn
\vspace{4mm}}%
\end{center}

\begin{center}
\begin{minipage}{135mm}
{\bf \small Abstract}\hskip 2mm {\small In this paper we get the sharp estimates of the $p$-adic Hardy and
 Hardy-Littlewood-P\'olya  operators on $L^q(|x|^{\alpha}_pdx)$. Also, we prove that the commutators
generated by the $p$-adic Hardy operators (Hardy-Littlewood-P\'olya operators)
 and the central BMO functions are bounded on $L^q(|x|^{\alpha}_pdx)$, more generally, on Herz spaces.
 }\\
 {\bf \small Keywords}\hskip 2mm {\small   $p$-adic Hardy operator, $p$-adic Hardy-Littlewood-P\'olya
operator, central BMO function, Herz space.}\\
{\bf \small MSC(2010)}\hskip 2mm {\small  Primary: 47G10, 11F85; Secondary: 26D15, 47A30, 47A63.}
\end{minipage}
\end{center}

\section{Introduction}

For a prime number $p$, let $\mathbb{Q}_{p}$ be the field of $p$-adic numbers. It is
defined as the completion of the field of rational numbers $\mathbb{Q}$ with respect to the
non-Archimedean $p$-adic norm $|\cdot|_p$. This norm is defined as follows: $|0|_p=0$; If any non-zero
rational number $x$ is represented as $x=p^{\gamma}\frac{m}{n}$, where $m$ and $n$ are integers which
are not divisible by $p$ and $\gamma$ is an integer, then $|x|_p=p^{-\gamma}$. It is not difficult to
show that the norm satisfies the following properties:
$$|xy|_p =|x|_p|y|_p,\qquad |x + y|_p \leq \max\{|x|_p, |y|_p\}.$$
It follows from the second property that when $|x|_p \neq |y|_p$, then
$|x + y|_p = \max\{|x|_p, |y|_p\}$.
From the standard $p$-adic analysis \cite{Vl}, we see that any non-zero $p$-adic number $x\in\mathbb{Q}_p$
can be uniquely represented in the canonical series
\begin{equation}
x=p^{\gamma}\sum_{j=0}^{\infty}a_jp^j,\qquad \gamma=\gamma(x)\in\mathbb{Z},\label{xp}
\end{equation}
where $a_j$ are integers, $0\leq a_j\leq p-1$, $a_0\neq 0$. The series \eqref{xp} converges in the $p$-adic norm
because $|a_jp^j|_p=p^{-j}$. Set $\mathbb{Q}_{p}^{*}=\mathbb{Q}_{p}\setminus\{0\}$.

The space $\mathbb{Q}_{p}^{n}$ consists of points $x=(x_1,x_2,\cdots,x_n)$, where
$x_j\in\mathbb{Q}_{p},~j=1,2,\cdots,n$.
The $p$-adic norm on $\mathbb{Q}_{p}^{n}$ is
\begin{equation}
|x|_p:=\max_{1\leq j\leq n}|x_j|_p,\qquad x\in \mathbb{Q}_{p}^{n}.
\end{equation}
Denote by
$$B_\gamma(a)=\{x\in\mathbb{Q}_{p}^{n}:|x-a|_p\leq p^{\gamma}\},$$
 the ball with center at $a\in\mathbb{Q}_{p}^{n}$
and radius $p^{\gamma}$, and its boundary
$$S_\gamma(a)=\{x\in\mathbb{Q}_{p}^{n}:|x-a|_p=p^{\gamma}\}=B_\gamma(a)\setminus B_{\gamma-1}(a).$$
Since $\mathbb{Q}_{p}^{n}$ is a locally compact commutative group under addition,
it follows from the standard analysis that there exists a Haar measure $dx$ on $\mathbb{Q}_{p}^{n}$,
which is unique up to positive constant multiple and is translation invariant. We normalize the measure
$dx$ by the equality
$$\int_{B_0(0)}dx=|B_0(0)|_H=1,$$
 where $|E|_H$ denotes the Haar measure of
a measurable subset $E$ of $\mathbb{Q}_{p}^{n}$. By simple calculation, we can obtain that
$$|B_\gamma(a)|_H=p^{\gamma n},\qquad |S_\gamma(a)|_H=p^{\gamma n}(1-p^{-n}),$$
 for any $a\in\mathbb{Q}_{p}^{n}$.
For a more complete introduction to the $p$-adic field, see \cite{Vl} or \cite{Ta}.

In recent years, $p$-adic analysis has received a lot of attention due to its application
in Mathematical Physics (cf.\cite{Al}, \cite{Av},\cite{Kh1},\cite{Kh2},\cite{Va} and \cite{Vl}).
There are numerous papers on $p$-adic analysis, such as \cite{Ha1} and \cite{Ha2} about Riesz potentials,
\cite{Al1},\cite{Cc},\cite{Ch},\cite{Ko} and \cite{Zu} about $p$-adic pseudo-differential equations, etc.
The Harmonic Analysis on $p$-adic field has been drawing more and more concern
 ( cf. \cite{Ki1},\cite{Ki2},\cite{Rim},\cite{Ro1},\cite{Ro2} and references therein).

The well-known Hardy's integral inequality \cite{Ha} tells us that for $1<q<\infty$,
$$\|Hf\|_{L^q(\mathbb{R}^{+})}\leq \frac{q}{q-1}\|f\|_{L^q(\mathbb{R}^{+})},$$
where
the classical Hardy operator is defined by
$$Hf(x):=\frac{1}{x}\int_{0}^{x}f(t)dt,$$
for non-negative integral function $f$ on $\mathbb{R}^{+}$, and
the constant $\frac{q}{q-1}$ is the best possible. Thus the norm of Hardy operator on
$L^q(\mathbb{R}^{+})$ is
$$\|H\|_{L^q(\mathbb{R}^{+})\rightarrow L^q(\mathbb{R}^{+})}=\frac{q}{q-1}.$$

Faris\cite{Fa} introduced the following $n$-dimensional Hardy operator, for nonnegative function $f$ on
$\mathbb{R}^{n}$,
$$\mathcal{H}f(x):=\frac{1}{\Omega_n|x|^{n}}\int_{|y|<|x|}f(y)dy,\quad
\mathcal{H^*}f(x):=\frac{1}{\Omega_n}\int_{|y|\geq|x|}\frac{f(y)}{|y|^{n}}dy,\quad
x\in\mathbb{R}^n\setminus\{0\},$$
where $\Omega_n$ is the volume of the unit ball in $\mathbb{R}^n$. Christ and Grafakos\cite{CG} obtained that
the norm of $\mathcal{H}$ and $\mathcal{H^*}$ on $ L^q(\mathbb{R}^{n}) $ are
$$\|\mathcal{H}\|_{L^q(\mathbb{R}^{n})\rightarrow L^q(\mathbb{R}^{n})}=
\|\mathcal{H^*}\|_{L^q(\mathbb{R}^{n})\rightarrow L^q(\mathbb{R}^{n})}=\frac{q}{q-1},$$
which is the same as that of the 1-dimension Hardy operator. Obviously, $\mathcal{H}$ and $\mathcal{H^*}$ satisfy
$$\int_{\mathbb{R}^n}g(x)\mathcal{H}f(x)dx=\int_{\mathbb{R}^n}f(x)\mathcal{H^*}g(x)dx.
$$
 In \cite{Fu1}, Fu, Grafakos,
Lu and Zhao proved that $\mathcal{H}$ is also bounded on the weighted Lebesgue space $L^q(|x|^{\alpha}dx) $
for $1<q<\infty $, $\alpha<n(q-1) $. And
$$\|\mathcal{H}\|_{L^q(|x|^{\alpha}dx)\rightarrow L^q(|x|^{\alpha}dx)}=\frac{qn}{qn-n-\alpha}.$$
It is clear that when $\alpha=0 $, the
result coincides with that we mentioned above. Inspired by these results, in this paper we will introduce the definition of the $p$-adic Hardy
operator and establish the sharp estimates of their boundedness on $L^q(|x|^\alpha_p dx)$.

\begin{defn}\label{th:def}
For a function $f$ on $\mathbb{Q}_{p}^{n}$, we define the {\it $p$-adic Hardy operators} as follows
\begin{equation}\begin{split}
\mathcal{H}^pf(x)&=\frac{1}{|x|_p^{n}}\int_{B(0,|x|_p)}f(t)dt,\quad x\in\mathbb{Q}_{p}^{n}\setminus\{0\},\\
\mathcal{H}^{p,*}f(x)&=\int_{\mathbb{Q}_{p}^{n}\setminus B(0,|x|_p)}\frac{f(t)}{|t|_p^{n}}dt,
\quad x\in\mathbb{Q}_{p}^{n}\setminus\{0\},
\end{split}\end{equation}
where $B(0,|x|_p)$ is a ball in $\mathbb{Q}_{p}^{n}$ with center at $0\in\mathbb{Q}_{p}^{n} $
and radius $|x|_p$.
\end{defn}

The $p$-adic Hardy operators $\mathcal{H}^p$ and $\mathcal{H}^p$ are adjoint mutually:
$$\int_{\mathbb{Q}_{p}^{n}}g(x)\mathcal{H}^pf(x)dx
=\int_{\mathbb{Q}_{p}^{n}}f(x)\mathcal{H}^{p,*}g(x)dx,
$$
when $f\in L^q(\mathbb{Q}_{p}^{n})$ and $g\in L^{q'}(\mathbb{Q}_{p}^{n})$, $\frac{1}{q}+\frac{1}{q'}=1$.

It is obvious that $|\mathcal{H}^p f|\leq \mathcal{M}^p f$, where $\mathcal{M}^p $
is the Hardy-Littlewood maximal operator\cite{Ki1}
defined by
$$\mathcal{M}^p f(x)=\sup_{\gamma\in\mathbb{Z}}\frac{1}{|B_\gamma (x)|_H}
\int_{B_\gamma (x)}|f(y)|dy,
\quad f\in L_{loc}^{1}(\mathbb{Q}_{p}^{n}).$$
The Hardy-Littlewood maximal operator plays a very
important role in Harmonic Analysis. The boundedness of $\mathcal{M}^p$ on $L^q(\mathbb{Q}_{p}^n)$ has been solved
(see for instance \cite{Ta}). But the best estimate of $\mathcal{M}^p$ on $L^q(\mathbb{Q}_{p}^n),~q>1,$
even that of Hardy-Littlewood maximal operator on Euclidean spaces $\mathbb{R}^n$ is very difficult to obtain. Instead,
in Section 2, we obtain the sharp estimates of $\mathcal{H}^p$ (and $p$-adic Hardy-Littlewood-P\'olya
operator), and the norm of $\mathcal{M}^p$ should be no less than that of $\mathcal{H}^p$. In Section 3, we will define the commutators
of the $p$-adic Hardy and
 Hardy-Littlewood-P\'olya  operators, and discuss the boundedness of them. One of the main innovative points is that we estimate the commutator
of Hardy-Littlewood-P\'olya  operator by that
of the $p$-adic Hardy operator.

\section{Sharp estimates of $p$-adic Hardy and Hardy-Littlewood-P\'olya operators }

We get the following sharp estimates of  $\mathcal{H}^p$ and  $\mathcal{H}^{p,*}$ on $L^q(|x|_p^{\alpha}dx)$.

\begin{thm}\label{th:thm1}
Let $1<q<\infty$ and $\alpha<n(q-1)$. Then
\begin{equation}
\|\mathcal{H}^p\|_{L^q(|x|_p^{\alpha}dx)\rightarrow L^q(|x|_p^{\alpha}dx)}
=\|\mathcal{H}^{p,*}\|_{L^q(|x|_p^{\alpha}dx)\rightarrow L^q(|x|_p^{\alpha}dx)}
=\frac{1-p^{-n}}{1-p^{\frac{\alpha}{q}-\frac{n}{q'}}},\label{thm1}
\end{equation}
where $\frac{1}{q}+\frac{1}{q'}=1$.
\end{thm}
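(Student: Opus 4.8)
The plan is to reduce everything to nonnegative radial functions and then exploit that the power functions $|x|_p^{s}$ are exact eigenfunctions of $\mathcal{H}^{p}$. First I would note $|\mathcal{H}^{p}f|\le\mathcal{H}^{p}|f|$, so it suffices to treat $f\ge 0$; and since the kernel of $\mathcal{H}^{p}$ depends on $t$ only through $|t|_{p}$, replacing $f$ by its spherical average $\bar f(x)=|S_{\gamma}(0)|_{H}^{-1}\int_{S_{\gamma}(0)}f\,dy$ on the sphere $|x|_{p}=p^{\gamma}$ leaves $\mathcal{H}^{p}f$ unchanged (integrals over balls are preserved) while, by Jensen's inequality with the weight $|x|_{p}^{\alpha}$ constant on each sphere, does not increase $\|f\|_{L^{q}(|x|_{p}^{\alpha}dx)}$. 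Hence the operator norm is attained on radial functions, for which $\mathcal{H}^{p}f(x)=(1-p^{-n})\sum_{k\le\gamma}\phi(p^{k})p^{(k-\gamma)n}$ when $f=\phi(|\cdot|_{p})$ and $|x|_{p}=p^{\gamma}$; a direct summation then gives $\mathcal{H}^{p}(|x|_{p}^{s})=\frac{1-p^{-n}}{1-p^{-(s+n)}}|x|_{p}^{s}$ for $s>-n$.

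For the upper bound I would use the $p$-adic dilation $t=p^{-\gamma(x)}u$, under which $B(0,|x|_{p})$ becomes the unit ball and $dt=p^{n\gamma(x)}du$, so that
\[
\mathcal{H}^{p}f(x)=\int_{|u|_{p}\le 1}f\bigl(p^{-\gamma(x)}u\bigr)\,du .
\]
Applying Minkowski's integral inequality and computing, for radial $f$, that $\|f(p^{-\gamma(\cdot)}u)\|_{L^{q}(|x|_{p}^{\alpha}dx)}=|u|_{p}^{-(n+\alpha)/q}\|f\|_{L^{q}(|x|_{p}^{\alpha}dx)}$ (via the substitution $\gamma'=\gamma-m$ when $|u|_{p}=p^{-m}$), I reduce the bound to the scalar integral $\int_{|u|_{p}\le 1}|u|_{p}^{-(n+\alpha)/q}\,du=(1-p^{-n})\sum_{m\ge 0}p^{m((n+\alpha)/q-n)}$. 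This geometric series converges precisely because $\alpha<n(q-1)$, i.e. $(n+\alpha)/q<n$, and sums to $\frac{1-p^{-n}}{1-p^{\alpha/q-n/q'}}$, the asserted constant.

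For sharpness I would test on the truncated near-eigenfunctions $f_{\varepsilon}(x)=|x|_{p}^{-(n+\alpha)/q+\varepsilon}\,\mathbf{1}_{\{|x|_{p}\le 1\}}$. The exponent $s=-(n+\alpha)/q$ is exactly the one making $|x|_{p}^{s}$ borderline in $L^{q}(|x|_{p}^{\alpha}dx)$, so $\|f_{\varepsilon}\|_{L^{q}(|x|_{p}^{\alpha}dx)}$ blows up like $(1-p^{-\varepsilon q})^{-1/q}$; meanwhile on the unit ball $\mathcal{H}^{p}f_{\varepsilon}$ equals $\frac{1-p^{-n}}{1-p^{-(s+\varepsilon+n)}}\,f_{\varepsilon}$, and outside it is the decaying tail $\frac{1-p^{-n}}{1-p^{-(s+\varepsilon+n)}}p^{-n\gamma}$. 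Since $\alpha-n(q-1)<0$, this tail contributes only an $O(1)$ amount to $\|\mathcal{H}^{p}f_{\varepsilon}\|^{q}$, which is swamped by the diverging main term, so the quotient $\|\mathcal{H}^{p}f_{\varepsilon}\|/\|f_{\varepsilon}\|$ tends to $\frac{1-p^{-n}}{1-p^{-(n-(n+\alpha)/q)}}=\frac{1-p^{-n}}{1-p^{\alpha/q-n/q'}}$ as $\varepsilon\to 0^{+}$. Verifying that this tail does not contaminate the limiting ratio is the main technical obstacle, and it is controlled exactly by the hypothesis $\alpha<n(q-1)$.

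Finally, for $\mathcal{H}^{p,*}$ I would invoke the mutual adjoint relation $\int g\,\mathcal{H}^{p}f\,dx=\int f\,\mathcal{H}^{p,*}g\,dx$ recorded above: pairing $\mathcal{H}^{p,*}f$ against a test function and transferring through this identity expresses $\|\mathcal{H}^{p,*}\|_{L^{q}(|x|_{p}^{\alpha}dx)\to L^{q}(|x|_{p}^{\alpha}dx)}$ as a Hardy-operator norm, which the previous steps evaluate. Alternatively the same radialization-plus-dilation scheme applies verbatim, with $t=p^{-\gamma(x)}u$ now carrying $\mathbb{Q}_{p}^{n}\setminus B(0,|x|_{p})$ to $\{|u|_{p}>1\}$ and reducing matters to the same critical-exponent computation at $s=-(n+\alpha)/q$.
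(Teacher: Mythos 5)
Your treatment of $\mathcal{H}^{p}$ is correct and is essentially the paper's own argument: the same reduction to radial functions (the paper averages $f$ over the unit sphere and uses H\"older where you invoke Jensen), the same dilation-plus-Minkowski upper bound reducing everything to $\int_{|u|_p\le 1}|u|_p^{-(n+\alpha)/q}\,du=\frac{1-p^{-n}}{1-p^{\alpha/q-n/q'}}$, and a near-critical power function for sharpness. Your test function $f_{\varepsilon}=|x|_p^{-(n+\alpha)/q+\varepsilon}\mathbf{1}_{\{|x|_p\le 1\}}$ differs from the paper's, which lives on $\{|x|_p\ge 1\}$ with exponent $-(n+\alpha)/q-\varepsilon$ and needs a truncation and Fatou argument; yours is in fact cleaner, because on its support $\mathcal{H}^{p}f_{\varepsilon}$ is exactly the constant $\frac{1-p^{-n}}{1-p^{-(n/q'-\alpha/q+\varepsilon)}}$ times $f_{\varepsilon}$, so restricting $\|\mathcal{H}^{p}f_{\varepsilon}\|$ to the unit ball already yields the lower bound. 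The tail on $\{|x|_p>1\}$ only \emph{adds} to the numerator of the quotient, so the ``main technical obstacle'' you flag is not an obstacle at all for a lower bound.

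The gap is in your last paragraph, on $\mathcal{H}^{p,*}$: neither of your two routes lands on the asserted constant, and you should not take for granted that it does. Via duality, the adjoint identity converts $\|\mathcal{H}^{p,*}\|_{L^{q}(|x|_p^{\alpha}dx)\to L^{q}(|x|_p^{\alpha}dx)}$ into $\|\mathcal{H}^{p}\|_{L^{q'}(|x|_p^{\alpha(1-q')}dx)\to L^{q'}(|x|_p^{\alpha(1-q')}dx)}$, and substituting $\tilde q=q'$, $\tilde\alpha=\alpha(1-q')$ into the formula you have just proved gives $\frac{1-p^{-n}}{1-p^{-(n+\alpha)/q}}$, not $\frac{1-p^{-n}}{1-p^{(n+\alpha)/q-n}}$. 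Directly, $\mathcal{H}^{p,*}$ acts on $|x|_p^{s}$ ($s<0$) with eigenvalue $\frac{1-p^{-n}}{1-p^{s}}$ (or $\frac{1-p^{-n}}{p^{-s}-1}$ if the strict inequality $|t|_p>|x|_p$ in the definition is kept), and at the critical exponent $s=-(n+\alpha)/q$ the same Minkowski bound shows this eigenvalue \emph{is} the operator norm; it agrees with the claimed value only when $(n+\alpha)/q=n/2$. Concretely, for $n=1$, $p=2$, $q=3$, $\alpha=0$ the norm of $\mathcal{H}^{p,*}$ on $L^{3}(\mathbb{Q}_{2})$ is $\frac{1/2}{2^{1/3}-1}\approx 1.92$, while the theorem asserts $\frac{1/2}{1-2^{-2/3}}\approx 1.35$. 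So the step ``which the previous steps evaluate'' cannot be completed as claimed; the paper itself disposes of this half with only the phrase ``by the similar method,'' and the equality of the two operator norms in the statement is precisely what your duality computation, carried out honestly, would have refuted.
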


When $\alpha=0$, we get the following corollary.

\begin{cor}
Let $1<q<\infty$. Then
\begin{equation}
\|\mathcal{H}^p\|_{L^q(\mathbb{Q}_{p}^{n})\rightarrow L^q(\mathbb{Q}_{p}^{n})}
=\|\mathcal{H}^{p,*}\|_{L^q(\mathbb{Q}_{p}^{n})\rightarrow L^q(\mathbb{Q}_{p}^{n})}
=\frac{1-p^{-n}}{1-p^{-\frac{n}{q'}}},
\end{equation}
where $\frac{1}{q}+\frac{1}{q'}=1$.
\end{cor}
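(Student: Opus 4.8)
The plan is to read off the Corollary as the case $\alpha=0$ of Theorem~\ref{th:thm1}, and then to record the short self-contained argument that produces the same constant, since that argument is cleanest precisely at $\alpha=0$. Putting $\alpha=0$ makes the weight $|x|_p^{\alpha}\equiv 1$, so $L^q(|x|_p^{\alpha}dx)=L^q(\mathbb{Q}_p^n)$, and collapses the exponent in \eqref{thm1} to $\tfrac{\alpha}{q}-\tfrac{n}{q'}=-\tfrac{n}{q'}$; substituting both into \eqref{thm1} gives exactly $\dfrac{1-p^{-n}}{1-p^{-n/q'}}$. Hence the Corollary adds nothing to Theorem~\ref{th:thm1}, and all of its content is the mechanism behind that theorem.

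To exhibit that mechanism directly for $\mathcal{H}^p$, I would first note that $\mathcal{H}^pf$ is constant on every sphere $S_\gamma(0)$ and depends on $f$ only through the averages $\int_{S_\gamma}f$. Replacing $f$ by its spherical average leaves $\mathcal{H}^pf$ unchanged while, by Jensen's inequality, not increasing $\|f\|_{L^q}$; so the operator norm is already attained on radial functions. Writing a radial $f$ as $f\equiv b_\gamma$ on $S_\gamma$ and using $|S_\gamma|_H=p^{\gamma n}(1-p^{-n})$, one gets for $|x|_p=p^k$
\[
\mathcal{H}^pf(x)=(1-p^{-n})\sum_{\gamma\le k}b_\gamma\,p^{(\gamma-k)n}.
\]
The substitution $a_\gamma=b_\gamma\,p^{\gamma n/q}$ normalises $\|f\|_{L^q}^q=(1-p^{-n})\|a\|_{\ell^q}^q$ and turns $\mathcal{H}^p$ into $(1-p^{-n})$ times convolution on $\ell^q(\mathbb{Z})$ with the one-sided geometric kernel $\kappa(j)=p^{-jn/q'}$ for $j\ge0$ (and $\kappa(j)=0$ for $j<0$).

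The operator norm of convolution by a nonnegative kernel on $\ell^q$ equals $\|\kappa\|_{\ell^1}$: Young's inequality gives the upper bound, and testing on long constant blocks $a=\mathbf 1_{\{1,\dots,N\}}$ and letting $N\to\infty$ gives the matching lower bound. Here $\|\kappa\|_{\ell^1}=\sum_{j\ge0}p^{-jn/q'}=\dfrac{1}{1-p^{-n/q'}}$, the series converging because $n/q'>0$ (the $\alpha=0$ instance of $\alpha<n(q-1)$ holds automatically). The prefactor $1-p^{-n}$ then reproduces the claimed norm; equivalently the blocks are truncations of the formal eigenfunction $f(x)=|x|_p^{-n/q}$, for which $\mathcal{H}^pf=\dfrac{1-p^{-n}}{1-p^{-n/q'}}\,f$.

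For $\mathcal{H}^{p,*}$ I would rerun the radial reduction, now summing over $\gamma>k$, or else exploit the pairing $\int g\,\mathcal{H}^pf=\int f\,\mathcal{H}^{p,*}g$. I expect the two genuine difficulties to be here. First, the sharp lower bound: the extremizer $|x|_p^{-n/q}$ is only borderline integrable, so one must truncate it to a large annulus and control the boundary terms to recover the constant in the limit. Second, because $p$-adic balls are simultaneously open and closed, the sphere $S_k$ is exactly where the integration regions of $\mathcal{H}^p$ and $\mathcal{H}^{p,*}$ abut, so the adjoint pairing must be set up with care; this is the step to watch when matching the adjoint constant to the forward one.
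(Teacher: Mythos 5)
Your first paragraph is precisely the paper's proof of this corollary: the paper obtains it from Theorem \ref{th:thm1} by setting $\alpha=0$, with no further argument, and your substitution into \eqref{thm1} is correct. Everything after that is a genuinely different route from the paper's proof of Theorem \ref{th:thm1} itself. The paper starts with the same spherical-averaging reduction to radial functions, but then gets the upper bound from Minkowski's integral inequality applied to $\int_{B(0,1)}f(|x|_p^{-1}y)\,dy$, and the lower bound by testing on the truncated power $f_{\epsilon}(x)=|x|_p^{-n/q-\epsilon}\chi_{\{|x|_p\geq 1\}}$ and letting $\epsilon\to0$ via Fatou's lemma. You instead discretize: after the normalization $a_\gamma=b_\gamma p^{\gamma n/q}$ the operator becomes $(1-p^{-n})$ times convolution on $\ell^q(\mathbb{Z})$ with the one-sided kernel $\kappa(j)=p^{-jn/q'}$, $j\geq0$, and the identity ``norm of convolution by a nonnegative kernel on $\ell^q$ equals $\|\kappa\|_{\ell^1}$'' gives both bounds at once. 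I checked the kernel computation and it is correct. What your route buys is a cleaner sharpness argument (constant blocks instead of the $\epsilon$-limit, and the transparent near-eigenfunction $|x|_p^{-n/q}$); what the paper's route buys is a template that is not special to the totally disconnected setting.

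The one substantive issue is the $\mathcal{H}^{p,*}$ half, exactly where you hedge and where the paper offers only ``by the similar method.'' If you carry out your radial reduction for $\mathcal{H}^{p,*}$ as literally defined (integration over $\mathbb{Q}_p^n\setminus B(0,|x|_p)$, i.e.\ $|t|_p>|x|_p$ since $p$-adic balls are closed), the kernel you obtain is $p^{-jn/q}$ for $j\geq1$, with $\ell^1$ norm $p^{-n/q}/(1-p^{-n/q})$, not $1/(1-p^{-n/q'})$; including the boundary sphere instead gives $1/(1-p^{-n/q})$. Either way this agrees with the duality heuristic ($\|\mathcal{H}^{p,*}\|_{L^q\to L^q}$ should match $\|\mathcal{H}^{p}\|_{L^{q'}\to L^{q'}}$ up to the boundary term) and coincides with the stated constant only when $q=2$. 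So your suspicion about the abutting sphere is well founded, but the conclusion is sharper than you state: completed honestly, your method does not reproduce the asserted equality of the two norms, and the $\mathcal{H}^{p,*}$ part of the corollary cannot be closed by this argument as the statement stands. You should either resolve which convention for $B(0,|x|_p)$ is intended and recompute, or flag the discrepancy explicitly rather than leave that half as a sketch.
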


\vspace{0.2cm}
\noindent{\bf Remark.} {\it Obviously, the $L^q$ norm of $\mathcal{H}^p$ on $\mathbb{Q}_{p}^{n}$
depends on $n$, however, the $L^q$ norm of $\mathcal{H}$ on $\mathbb{R}^{n}$ is independent of
the dimension $n$.}

\vspace{0.4cm}
The Hardy-Littlewood-P\'olya's linear operator\cite{Be} is defined by
\begin{equation*}
Tf(x)=\int_{0}^{+\infty}\frac{f(y)}{\max(x,y)}dy.
\end{equation*}
In \cite{Be}, the authors obtained that the norm of Hardy-Littlewood-P\'olya's  operator on
$L^q(\mathbb{R}^{+})$ (see also P. 254 in \cite{Har} ), $1<q<\infty$, is
$$\|T\|_{L^q(\mathbb{R}^{+})\rightarrow L^q(\mathbb{R}^{+})}=\frac{q^2}{q-1}.$$

Next, we consider $p$-adic version of Hardy-Littlewood-P\'olya operator.
We define the $p$-adic  Hardy-Littlewood-P\'olya operator as
\begin{equation*}
T^pf(x)=\int_{\mathbb{Q}_{p}^{*}}\frac{f(y)}{\max(|x|_p,|y|_p)}dy, \qquad x\in\mathbb{Q}_{p}.
\end{equation*}
By the similar method to the proof of Theorem \ref{th:thm1}, we obtain the norm of $p$-adic  Hardy-Littlewood-
P\'olya operator from
$L^q(|x|^{\alpha}_pdx)$ to $L^q(|x|^{\alpha}_pdx)$.

\begin{thm}\label{th:thm3}
Let $1<q<\infty$ and $-1<\alpha<q-1$. Then for any $f\in L^q(|x|^{\alpha}_pdx)$,
\begin{equation}
\|T^pf\|_{L^q(|x|^{\alpha}_pdx)}\leq \left(1-\frac{1}{p}\right)
\left(\frac{1}{1-p^{\frac{\alpha+1}{q}-1}}
+\frac{p^{-\frac{\alpha+1}{q}}}{1-p^{-\frac{\alpha+1}{q}}}\right)
\|f\|_{L^q(|x|^{\alpha}_pdx)}.\label{eq}
\end{equation}
Moreover,
\begin{equation}
\|T^p\|_{L^q(|x|^{\alpha}_pdx)\rightarrow L^q(|x|^{\alpha}_pdx)}
=\left(1-\frac{1}{p}\right)
\left(\frac{1}{1-p^{\frac{\alpha+1}{q}-1}}
+\frac{p^{-\frac{\alpha+1}{q}}}{1-p^{-\frac{\alpha+1}{q}}}\right).\label{norm1}
\end{equation}
\end{thm}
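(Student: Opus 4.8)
The plan is to mimic the proof of Theorem~\ref{th:thm1}: convert $T^p$ into a dilation-averaged form by a multiplicative change of variables, apply Minkowski's integral inequality to obtain the upper bound \eqref{eq}, and then exhibit a sequence of near-extremizers showing that the resulting constant is sharp. Throughout, write $C$ for the right-hand side of \eqref{norm1}. First I would fix $x\in\mathbb{Q}_p^*$ and substitute $y=xz$ in the defining integral; since multiplication by $x$ is a measure-scaling bijection of $\mathbb{Q}_p$ with $dy=|x|_p\,dz$ and $|y|_p=|x|_p|z|_p$, the homogeneity $\max(|x|_p,|y|_p)=|x|_p\max(1,|z|_p)$ makes the factors of $|x|_p$ cancel, yielding
\[
T^pf(x)=\int_{\mathbb{Q}_p^*}\frac{f(xz)}{\max(1,|z|_p)}\,dz .
\]
Applying Minkowski's integral inequality in $L^q(|x|_p^\alpha dx)$ then gives
\[
\|T^pf\|_{L^q(|x|_p^\alpha dx)}\le\int_{\mathbb{Q}_p^*}\frac{\big\|f(xz)\big\|_{L^q(|x|_p^\alpha dx)}}{\max(1,|z|_p)}\,dz ,
\]
the inner norm being taken in the variable $x$. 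This inner norm is computed by the substitution $u=xz$: because $dx=|z|_p^{-1}\,du$ and $|x|_p=|z|_p^{-1}|u|_p$, the weight $|x|_p^\alpha dx$ scales by $|z|_p^{-(\alpha+1)}$, so $\big\|f(xz)\big\|_{L^q(|x|_p^\alpha dx)}=|z|_p^{-(\alpha+1)/q}\|f\|_{L^q(|x|_p^\alpha dx)}$. Hence \eqref{eq} reduces to evaluating the scalar integral $\int_{\mathbb{Q}_p^*}|z|_p^{-(\alpha+1)/q}/\max(1,|z|_p)\,dz$.

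To evaluate this integral I would split $\mathbb{Q}_p^*$ into $\{|z|_p\le 1\}$ and $\{|z|_p>1\}$ and sum over the spheres $S_k(0)$ using $|S_k(0)|_H=p^k(1-p^{-1})$. On $\{|z|_p\le1\}$ one has $\max(1,|z|_p)=1$, and the geometric series $\sum_{k\le0}p^{k(1-(\alpha+1)/q)}$ converges exactly when $\alpha<q-1$, producing the first summand $(1-p^{-1})/(1-p^{(\alpha+1)/q-1})$. On $\{|z|_p>1\}$ the integrand is $|z|_p^{-(\alpha+1)/q-1}$ and the series $\sum_{k\ge1}p^{-k(\alpha+1)/q}$ converges exactly when $\alpha>-1$, producing the second summand $(1-p^{-1})p^{-(\alpha+1)/q}/(1-p^{-(\alpha+1)/q})$. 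Adding the two reproduces $C$, and simultaneously identifies the hypothesis $-1<\alpha<q-1$ as precisely the convergence range.

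For sharpness I would observe that the borderline power $f_0(x)=|x|_p^{-(\alpha+1)/q}$ satisfies $f_0(xz)=|z|_p^{-(\alpha+1)/q}f_0(x)$, so the integrands $x\mapsto f_0(xz)$ are positively proportional across $z$ and Minkowski's inequality becomes an equality; since $f_0\notin L^q(|x|_p^\alpha dx)$, I would test instead on $f_N(x)=|x|_p^{-(\alpha+1)/q}\chi_{\{p^{-N}\le|x|_p\le p^N\}}$ and let $N\to\infty$. Here $\|f_N\|_{L^q(|x|_p^\alpha dx)}^q=(2N+1)(1-p^{-1})$, while on each sphere $S_j(0)$ the value $T^pf_N(x)$ is the truncation of the geometric series that sums, in the untruncated case, to $C\,|x|_p^{-(\alpha+1)/q}$; thus for $j$ in the bulk of the annulus $T^pf_N(x)\approx C\,f_N(x)$. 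The main obstacle is the bookkeeping at the two ends: I expect the contributions from $j$ near or beyond $\pm N$ to be controlled by convergent geometric tails (using $\alpha<q-1$ at the upper end and $\alpha>-1$ at the lower end), hence $O(1)$ and negligible against the $O(N)$ bulk. This yields $\|T^pf_N\|^q/\|f_N\|^q\to C^q$, which together with the upper bound establishes the exact norm \eqref{norm1}.
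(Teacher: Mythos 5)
Your upper bound is exactly the paper's argument: the substitution $y=xt$, Minkowski's integral inequality, and the sphere-by-sphere evaluation of $\int_{\mathbb{Q}_p^*}|t|_p^{-(\alpha+1)/q}/\max(1,|t|_p)\,dt$ appear verbatim in the paper's proof of \eqref{less1}, and your computation of the two geometric series (and the identification of $-1<\alpha<q-1$ as the convergence range) is correct. Where you genuinely diverge is the sharpness part. The paper tests on the perturbed powers $f_\epsilon(x)=|x|_p^{-(\alpha+1)/q-\epsilon}\chi_{\{|x|_p\ge 1\}}$, restricts the integrals to $|x|_p\ge|\epsilon|_p$ and $|t|_p\ge|\epsilon|_p^{-1}$, and then sends $\epsilon=p^{-k}\to 0$ using Fatou's lemma (a slightly curious device in which $\epsilon$ is simultaneously a real exponent and a $p$-adic number through $|\epsilon|_p$). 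You instead use the truncated exact power $f_N=|x|_p^{-(\alpha+1)/q}\chi_{\{p^{-N}\le|x|_p\le p^N\}}$ and a bulk-versus-boundary count. Your route checks out: with $\beta=(\alpha+1)/q\in(0,1)$ one finds on $S_j$, $-N\le j\le N$, that $T^pf_N=C p^{-j\beta}(1-\delta_j)$ with $\delta_j\le \frac{1-p^{-1}}{C}\bigl(\frac{p^{-(1-\beta)(N+1+j)}}{1-p^{\beta-1}}+\frac{p^{-\beta(N+1-j)}}{1-p^{-\beta}}\bigr)$, and since $\beta q=\alpha+1$ the weighted $q$-th powers telescope to $\|T^pf_N\|^q\ge C^q(1-p^{-1})\sum_{j=-N}^N(1-\delta_j)^q$ with $\sum_j\delta_j=O(1)$ uniformly in $N$, so the ratio tends to $C^q$ as you predict. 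What each approach buys: yours makes the near-equality in Minkowski transparent (the functions $x\mapsto f_0(xz)$ are literally proportional) and keeps all exponents real, at the cost of the endpoint bookkeeping you flag; the paper's avoids that bookkeeping by pushing the truncation error into the factor $|\epsilon|_p^{\epsilon}\to1$, at the cost of a less intuitive choice of test function. Both are complete, standard arguments for the lower bound.
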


When $\alpha=0$, we get the norm of $T^p$ on
$L^q(\mathbb{Q}_p)$.
\begin{cor}
Let $1<q<\infty$. Then
\begin{equation}
\|T^p\|_{L^q(\mathbb{Q}_p)\rightarrow L^q(\mathbb{Q}_p)}
=\left(1-\frac{1}{p}\right)
\left(\frac{1}{1-p^{\frac{1}{q}-1}}
+\frac{p^{-\frac{1}{q}}}{1-p^{-\frac{1}{q}}}\right).
\end{equation}
\end{cor}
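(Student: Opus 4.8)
The plan is to mimic the proof of Theorem \ref{th:thm1}, reducing the computation of the operator norm to a sharp estimate for a discrete convolution on $\mathbb{Z}$. Throughout $n=1$, and I write $\delta=\frac{\alpha+1}{q}$, so that the hypothesis $-1<\alpha<q-1$ is exactly $0<\delta<1$. First I would reduce to nonnegative radial functions: since the kernel $1/\max(|x|_p,|y|_p)$ is nonnegative and depends on $x$ only through $|x|_p$, we have $|T^pf|\le T^p|f|$ and $T^pf$ is itself radial; moreover, replacing $f$ by its average $\bar f$ over each sphere $S_\gamma(0)$ leaves $T^pf$ unchanged (the kernel is constant on spheres in $y$) while $\|\bar f\|_{L^q(|x|_p^\alpha dx)}\le\|f\|_{L^q(|x|_p^\alpha dx)}$ by Jensen's inequality, because the weight $|x|_p^\alpha$ is also constant on spheres. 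Hence the norm is attained among nonnegative radial $f$, and writing $f_\gamma$ for the value of such an $f$ on $S_\gamma(0)$ and using $|S_\gamma(0)|_H=p^\gamma(1-p^{-1})$, one finds for $|x|_p=p^k$ that
\[
T^pf(x)=\left(1-\tfrac1p\right)\Big(\sum_{\gamma\le k}f_\gamma\,p^{\gamma-k}+\sum_{\gamma>k}f_\gamma\Big),\qquad \|f\|_{L^q(|x|_p^\alpha dx)}^q=(1-p^{-1})\sum_\gamma f_\gamma^q\,p^{\gamma q\delta}.
\]

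The substitution $a_\gamma=f_\gamma\,p^{\gamma\delta}$ then identifies $T^p$, up to the factor $1-p^{-1}$ and an isometry of $\ell^q(\mathbb{Z})$, with convolution against the kernel $K$ given by $K(j)=p^{-j(1-\delta)}$ for $j\ge0$ and $K(j)=p^{j\delta}$ for $j<0$; concretely $\|T^pf\|/\|f\|=(1-p^{-1})\,\|K*a\|_{\ell^q}/\|a\|_{\ell^q}$. Young's inequality for convolution gives the upper bound $\|T^p\|\le(1-p^{-1})\|K\|_{\ell^1}$, and summing the two geometric series,
\[
\|K\|_{\ell^1}=\sum_{j\ge0}p^{-j(1-\delta)}+\sum_{j\ge1}p^{-j\delta}=\frac{1}{1-p^{\delta-1}}+\frac{p^{-\delta}}{1-p^{-\delta}},
\]
which is precisely the bracketed factor in \eqref{norm1}; the two series converge exactly because $0<\delta<1$, i.e. $-1<\alpha<q-1$. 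This proves \eqref{eq}.

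It remains to establish sharpness, which I expect to be the main obstacle, since the natural extremizer $f(x)=|x|_p^{-\delta}$ (the constant sequence $a\equiv1$, formally an eigenfunction of $K$ with eigenvalue $\|K\|_{\ell^1}$) does not lie in $L^q(|x|_p^\alpha dx)$. I would instead test on the truncations $f_N(x)=|x|_p^{-\delta}\mathbf 1_{\{p^{-N}\le|x|_p\le p^N\}}$, corresponding to $a=\mathbf 1_{\{|\gamma|\le N\}}$. For the central indices $|k|\le N-M$ the sum $(K*a)_k$ dominates $\sum_{|j|\le M}K(j)$, so keeping only those indices yields
\[
\frac{\|T^pf_N\|_{L^q(|x|_p^\alpha dx)}^q}{\|f_N\|_{L^q(|x|_p^\alpha dx)}^q}\ge(1-p^{-1})^q\,\frac{2(N-M)+1}{2N+1}\Big(\sum_{|j|\le M}K(j)\Big)^q.
\]
Letting $N\to\infty$ with $M$ fixed and then $M\to\infty$ recovers the full value $(1-p^{-1})\|K\|_{\ell^1}$, so the constant in \eqref{norm1} is best possible. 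The only delicate point is the order of limits: the truncation window must be wide enough that the boundary indices, where $(K*a)_k$ dips below $\|K\|_{\ell^1}$, contribute negligibly, which is ensured by taking $M=M(N)\to\infty$ with $M/N\to0$.
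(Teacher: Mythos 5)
Your argument is correct, and it reaches the conclusion by a genuinely different route from the paper. The paper proves the general weighted statement (Theorem \ref{th:thm3}) in the continuous setting: the upper bound comes from the dilation substitution $y=xt$ followed by Minkowski's integral inequality, which reduces everything to the integral $\int_{\mathbb{Q}_p^*}|t|_p^{-(\alpha+1)/q}\max(1,|t|_p)^{-1}\,dt$, and the lower bound comes from testing on the family $f_\epsilon(x)=|x|_p^{-(1+\alpha)/q-\epsilon}\mathbf{1}_{\{|x|_p\ge1\}}$ with $\epsilon=p^{-k}\to0$ and an appeal to Fatou's lemma; the corollary is then the case $\alpha=0$. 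You instead first perform the spherical-averaging reduction (which the paper uses only for $\mathcal{H}^p$ in Theorem \ref{th:thm1}, not for $T^p$), discretize to exhibit $T^p$ as $(1-p^{-1})$ times convolution on $\ell^q(\mathbb{Z})$ with the nonnegative kernel $K$, and then invoke the exact computation of the $\ell^q\to\ell^q$ norm of such a convolution: Young gives $\le\|K\|_{\ell^1}$, and truncated constant sequences give the matching lower bound. Your computations check out ($T^pf$ on $S_k$, the weighted norm, the identification of $(K*a)_k$, and $\|K\|_{\ell^1}$ all agree with the constant in the statement), and your closing worry about the order of limits is unnecessary: fixing $M$, letting $N\to\infty$ so that $\tfrac{2(N-M)+1}{2N+1}\to1$, and only then letting $M\to\infty$ already yields $\|T^p\|\ge(1-p^{-1})\|K\|_{\ell^1}$, with no need for a coupled choice $M=M(N)$. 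What your route buys is conceptual transparency --- the sharp constant is revealed as the $\ell^1$ norm of a convolution kernel, and the near-extremizers are the standard approximate eigenfunctions $a\equiv\mathbf{1}_{\{|\gamma|\le N\}}$ --- at the cost of the radial reduction and discretization bookkeeping; the paper's route is shorter on the page and stays entirely in the continuous setting, but hides why the constant is what it is.
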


\begin{proof}[Proof of Theorem \ref{th:thm1}]
Firstly, we claim that the operator $\mathcal{H}^p$ and its restriction to the functions $g$ satisfying $g(x)=g(|x|_p^{-1})$
 have the same operator norm on
$L^q(|x|_p^{\alpha}dx)$
 In fact, set
$$g(x)=\frac{1}{1-p^{-n}}\int_{|\xi|_p=1}f(|x|_p^{-1}\xi)d\xi,\qquad x\in \mathbb{Q}_p^{n}.$$
It's easy to see that $g$ satisfies that $g(x)=g(|x|_p^{-1})$ and $\mathcal{H}^pg=\mathcal{H}^pf$. By H\"older's inequality, we get
\begin{equation*}\begin{split}
\|g\|_{L^q(|x|_p^{\alpha}dx)}&=\left(
\int_{\mathbb{Q}_p^{n}}\left|\frac{1}{1-p^{-n}}\int_{|\xi|_p=1}f(|x|_p^{-1}\xi)d\xi\right|^q
|x|_p^{\alpha}dx\right)^{\frac{1}{q}}\\
&\leq\left(
\int_{\mathbb{Q}_p^{n}}\frac{1}{(1-p^{-n})^q}\left(\int_{|\xi|_p=1}|f(|x|_p^{-1}\xi)|^q d\xi\right)
\left(\int_{|\xi|_p=1} d\xi\right)^\frac{q}{q'}
|x|_p^{\alpha}dx\right)^{\frac{1}{q}}\\
&=\left(
\int_{\mathbb{Q}_p^{n}}\frac{1}{1-p^{-n}}\int_{|\xi|_p=1}|f(|x|_p^{-1}\xi)|^q d\xi
|x|_p^{\alpha}dx\right)^{\frac{1}{q}}\\
&=\frac{1}{(1-p^{-n})^{\frac{1}{q}}}\left(
\int_{\mathbb{Q}_p^{n}}\int_{|y|_p=|x|_p}|f(y)|^q dy
|x|_p^{\alpha-n}dx\right)^{\frac{1}{q}}\\
&=\frac{1}{(1-p^{-n})^{\frac{1}{q}}}\left(
\int_{\mathbb{Q}_p^{n}}\int_{|x|_p=|y|_p}|x|_p^{\alpha-n}dx|f(y)|^q dy
\right)^{\frac{1}{q}}\\
&=\|f\|_{L^q(|x|_p^{\alpha}dx)}.
\end{split}\end{equation*}
Therefore,
$$\frac{\|\mathcal{H}^pf\|_{L^q(|x|_p^{\alpha}dx)}}{\|f\|_{L^q(|x|_p^{\alpha}dx)}}\leq
\frac{\|\mathcal{H}^pg\|_{L^q(|x|_p^{\alpha}dx)}}{\|g\|_{L^q(|x|_p^{\alpha}dx)}},$$
which implies that the claim. In the following, without loss of generality, we may assume that
$f\in L^q(|x|_p^{\alpha}dx)$ satisfies that $f(x)=f(|x|_p^{-1})$.

By changing of variables $t=|x|_p^{-1}y$ , we have
\begin{equation*}\begin{split}
\|\mathcal{H}^pf\|_{L^q(|x|_p^{\alpha}dx)}&=\left(\int_{\mathbb{Q}_{p}^{n}}
\left|\frac{1}{|x|_p^{n}}\int_{B(0,|x|_p)}f(t)dt\right|^q|x|_p^{\alpha}dx\right)^{\frac{1}{q}}\\
&=\left(\int_{\mathbb{Q}_{p}^{n}}
\left|\int_{B(0,1)}f(|x|_p^{-1}y)dy\right|^q|x|_p^{\alpha}dx\right)^{\frac{1}{q}}\\
\end{split}\end{equation*}
Then using Minkowski's integral inequality, we get
\begin{equation*}\begin{split}
\|\mathcal{H}^pf\|_{L^q(|x|_p^{\alpha}dx)}
&\leq \int_{B(0,1)}\left(\int_{\mathbb{Q}_{p}^{n}}\left|f(|y|_p^{-1}x)\right|^q|x|_p^{\alpha}dx\right)^{\frac{1}{q}}dy\\
&\leq \left(\int_{B(0,1)}|y|_p^{-\frac{n}{q}-\frac{\alpha}{q}}dy\right)
\|f\|_{L^q(|x|_p^{\alpha}dx)}\\
&=\left(\sum_{k=0}^{\infty}\int_{|y|_p=p^{-k}}p^{\frac{k(n+\alpha)}{q}}dy\right)\|f\|_{L^q(|x|_p^{\alpha}dx)}\\
&=\frac{1-p^{-n}}{1-p^{\frac{\alpha}{q}-\frac{n}{q'}}}\|f\|_{L^q(|x|_p^{\alpha}dx)},
\end{split}\end{equation*}
where $\frac{1}{q}+\frac{1}{q'}=1$. Therefore, we get
\begin{equation}
\|\mathcal{H}^p\|_{L^q(|x|_p^{\alpha}dx)\rightarrow L^q(|x|_p^{\alpha}dx)}
\leq\frac{1-p^{-n}}{1-p^{\frac{\alpha}{q}-\frac{n}{q'}}}.\label{less}
\end{equation}

On the other hand, for $0<\epsilon<1$, we take
\begin{equation*}
f_{\epsilon}=\left\{
       \begin{array}{ccc}
       0,&\quad&|x|_p<1, \\
      |x|_p^{-\frac{n}{q}-\frac{\alpha}{q}-\epsilon},&\quad&|x|_p\geq1.
        \end{array}\right.
\end{equation*}
Then $\|f_{\epsilon}\|^q_{L^q(|x|_p^{\alpha}dx)}=\frac{1-p^{-n}}{1-p^{-\epsilon q}}$, and
\begin{equation*}
\mathcal{H}^pf_{\epsilon}(x)=\left\{
       \begin{array}{ccc}
       0,&\quad&|x|_p<1, \\
      |x|_p^{-\frac{n}{q}-\frac{\alpha}{q}-\epsilon}\int_{\frac{1}{|x|_p}\leq|t|_p\leq 1}
      |t|_p^{-\frac{n}{q}-\frac{\alpha}{q}-\epsilon}dt,&\quad&|x|_p\geq1.
        \end{array}\right.
\end{equation*}

Assume $|\epsilon|_p>1$. We have
\begin{equation}\begin{split}
\|\mathcal{H}^pf_{\epsilon}\|_{L^q(|x|_p^{\alpha}dx)}&=\left\{\int_{|x|_{p}\geq 1}
\left(|x|_p^{-\frac{n}{q}-\frac{\alpha}{q}-\epsilon}
\int_{\frac{1}{|x|_p}\leq|t|_p\leq 1}|t|_p^{-\frac{n}{q}-\frac{\alpha}{q}-\epsilon}dt\right)^q
|x|_p^{\alpha}dx\right\}^{\frac{1}{q}}\\
&\geq\left\{\int_{|x|_{p}\geq |\epsilon|_p}
\left(|x|_p^{-\frac{n}{q}-\frac{\alpha}{q}-\epsilon}
\int_{\frac{1}{|\epsilon|_p}\leq|t|_p\leq 1}|t|_p^{-\frac{n}{q}-\frac{\alpha}{q}-\epsilon}dt\right)^q
|x|_p^{\alpha}dx\right\}^{\frac{1}{q}}\\
&= \left(\int_{|x|_{p}\geq |\epsilon|_p}
|x|_p^{-n-\epsilon q}dx\right)^{\frac{1}{q}}
\int_{\frac{1}{|\epsilon|_p}\leq|t|_p\leq 1}|t|_p^{-\frac{n}{q}-\frac{\alpha}{q}-\epsilon}dt
\\\end{split}\end{equation}
\begin{equation*}\begin{split}
&=\|f_{\epsilon}\|_{L^q(|x|_p^{\alpha}dx)}|\epsilon|_p^{-\epsilon}
\int_{\frac{1}{|\epsilon|_p}\leq|t|_p\leq 1}|t|_p^{-\frac{n}{q}-\frac{\alpha}{q}-\epsilon}dt.
\end{split}\end{equation*}
Therefore,
\begin{equation}
\int_{\frac{1}{|\epsilon|_p}\leq|t|_p\leq 1}|t|_p^{-\frac{n}{q}-\frac{\alpha}{q}-\epsilon}dt
\leq\|\mathcal{H}^p\|_{L^q(|x|_p^{\alpha}dx)\rightarrow L^q(|x|_p^{\alpha}dx)}
|\epsilon|_p^{\epsilon}.
\end{equation}

Now take $\epsilon=p^{-k},~k=1,2,3,\cdots$. Then $|\epsilon|_p=p^{k}>1$. Letting $k$ approach to $\infty$,
then $\epsilon$ approaches to $0$ and $|\epsilon|_p^{\epsilon}=p^{\frac{k}{p^{k}}}$ approaches to $1$. Then by
Fatou's Lemma, we obtain
\begin{equation}
\frac{1-p^{-n}}{1-p^{\frac{\alpha}{q}-\frac{n}{q'}}}=
\int_{0<|t|_p\leq 1}|t|_p^{-\frac{n}{q}-\frac{\alpha}{q}}dt
\leq\|\mathcal{H}^p\|_{L^q(|x|_p^{\alpha}dx)\rightarrow L^q(|x|_p^{\alpha}dx)}.\label{more}
\end{equation}
Then \eqref{less} and \eqref{more} imply that $$\|\mathcal{H}^p\|_{L^q(|x|_p^{\alpha}dx)\rightarrow L^q(|x|_p^{\alpha}dx)}=\frac{1-p^{-n}}{1-p^{\frac{\alpha}{q}-\frac{n}{q'}}}.$$

By the similar method, we can also obtain that $$\|\mathcal{H}^{p,*}\|_{L^q(|x|_p^{\alpha}dx)\rightarrow L^q(|x|_p^{\alpha}dx)}=\frac{1-p^{-n}}{1-p^{\frac{\alpha}{q}-\frac{n}{q'}}}.$$
 Theorem \ref{th:thm1} is proved.
\end{proof}

\begin{proof}[Proof of Theorem \ref{th:thm3}]
By definition and the change of variables $y=xt$, we have
\begin{equation*}\begin{split}
\|T^pf\|_{L^q(|x|_p^{\alpha}dx)}&=\left(\int_{\mathbb{Q}_{p}}
\left|\int_{\mathbb{Q}_{p}^{*}}\frac{f(y)}{\max(|x|_p,|y|_p)}dy\right|^q|x|_p^{\alpha}dx\right)^{\frac{1}{q}}\\
&\leq\left(\int_{\mathbb{Q}_{p}}
\left(\int_{\mathbb{Q}_{p}^{*}}\frac{|f(y)|}{\max(|x|_p,|y|_p)}dy\right)^q|x|_p^{\alpha}dx\right)^{\frac{1}{q}}\\
&=\left(\int_{\mathbb{Q}_{p}}
\left(\int_{\mathbb{Q}_{p}^{*}}\frac{|f(xt)|}{\max(1,|t|_p)}dt\right)^q|x|_p^{\alpha}dx\right)^{\frac{1}{q}}.
\end{split}\end{equation*}

By Minkowski's integral inequality, we get
\begin{equation}\begin{split}
\|T^pf\|_{L^q(|x|_p^{\alpha}dx)}&\leq
\int_{\mathbb{Q}_{p}^{*}}\left(\int_{\mathbb{Q}_{p}}|f(xt)|^q|x|_p^{\alpha}dx\right)^{\frac{1}{q}}
\frac{1}{\max(1,|t|_p)}dt\\
&\leq \int_{\mathbb{Q}_{p}^{*}}\left(\int_{\mathbb{Q}_{p}}|f(x)|^q|x|_p^{\alpha}dx\right)^{\frac{1}{q}}
\frac{|t|_p^{-\frac{\alpha+1}{q}}}{\max(1,|t|_p)}dt\\
&=\|f\|_{L^q(|x|^{\alpha}_pdx)}\int_{\mathbb{Q}_{p}^{*}}
\frac{|t|_p^{-\frac{\alpha+1}{q}}}{\max(1,|t|_p)}dt.\label{Tf1}
\end{split}\end{equation}

Since
\begin{equation}\begin{split}
\int_{\mathbb{Q}_{p}^{*}}
\frac{|t|_p^{-\frac{\alpha+1}{q}}}{\max(1,|t|_p)}dt
&=\sum_{k=-\infty}^{0}\int_{S_k}
\frac{|t|_p^{-\frac{\alpha+1}{q}}}{\max(1,|t|_p)}dt+
\sum_{k=1}^{+\infty}\int_{S_k}
\frac{|t|_p^{-\frac{\alpha+1}{q}}}{\max(1,|t|_p)}dt\\
&=\left(1-\frac{1}{p}\right)\left(\sum_{k=-\infty}^{0}p^{k(1-\frac{\alpha+1}{q})}
+
\sum_{k=1}^{+\infty}p^{-\frac{k(\alpha+1)}{q}}\right)\\
&=\left(1-\frac{1}{p}\right)
\left(\frac{1}{1-p^{\frac{\alpha+1}{q}-1}}
+\frac{p^{-\frac{\alpha+1}{q}}}{1-p^{-\frac{\alpha+1}{q}}}\right).\label{norm}
\end{split}\end{equation}
Substituting \eqref{norm} into \eqref{Tf1} shows that \eqref{eq} holds and
\begin{equation}
\|T^p\|_{L^q(|x|^{\alpha}_pdx)\rightarrow L^q(|x|^{\alpha}_pdx)}
\leq\left(1-\frac{1}{p}\right)
\left(\frac{1}{1-p^{\frac{\alpha+1}{q}-1}}
+\frac{p^{-\frac{\alpha+1}{q}}}{1-p^{-\frac{\alpha+1}{q}}}\right).\label{less1}
\end{equation}

On the other hand, for $0<\epsilon<1$, let
\begin{equation*}
f_{\epsilon}=\left\{
       \begin{array}{ccc}
       0,&\quad&|x|_p<1, \\
      |x|_p^{\frac{-1-\alpha}{q}-\epsilon},&\quad&|x|_p\geq1.
        \end{array}\right.
\end{equation*}
Then $\|f_{\epsilon}\|^q_{L^q(|x|_p^{\alpha}dx)}=\frac{1-p^{-1}}{1-p^{-\epsilon q}}$, and
\begin{equation*}
T^pf_{\epsilon}(x)=
      \int_{|y|_p\geq1}
      \frac{|y|_p^{\frac{-1-\alpha}{q}-\epsilon}}{\max(|x|_p,|y|_p)}dy.
\end{equation*}
Set $|\epsilon|_p>1$. We have
\begin{equation}\begin{split}
\|T^pf_{\epsilon}\|_{L^q(|x|_p^{\alpha}dx)}&=\left\{\int_{\mathbb{Q}_{p}}
\left( \int_{|y|_p\geq1}
      \frac{|y|_p^{\frac{-1-\alpha}{q}-\epsilon}}{\max(|x|_p,|y|_p)}dy\right)^q
|x|_p^{\alpha}dx\right\}^{\frac{1}{q}}\\
&\geq\left\{\int_{|x|_{p}\geq |\epsilon|_p}
\left( \int_{|t|_p\geq \frac{1}{|\epsilon|_p}}
      \frac{|t|_p^{\frac{-1-\alpha}{q}-\epsilon}}{\max(1,|t|_p)}dt\right)^q
|x|_p^{-1-\epsilon q}dx\right\}^{\frac{1}{q}}\\
&=\|f_{\epsilon}\|^q_{L^q(|x|_p^{\alpha}dx)}|\epsilon|_p^{-\epsilon}
\int_{|t|_p\geq \frac{1}{|\epsilon|_p}}
      \frac{|t|_p^{\frac{-1-\alpha}{q}-\epsilon}}{\max(1,|t|_p)}dt.
\end{split}\end{equation}
Therefore,
\begin{equation}
\int_{|t|_p\geq \frac{1}{|\epsilon|_p}}
      \frac{|t|_p^{\frac{-1-\alpha}{q}-\epsilon}}{\max(1,|t|_p)}dt
\leq\|T^p\|_{L^q(|x|_p^{\alpha}dx)\rightarrow L^q(|x|_p^{\alpha}dx)}
|\epsilon|_p^{\epsilon}.
\end{equation}

Now take $\epsilon=p^{-k},~k=1,2,3,\cdots$. Then $|\epsilon|_p=p^{k}>1$. Letting $k$ approach to $\infty$ and
using
Fatou's Lemma, we obtain

\begin{equation}\begin{split}
\left(1-\frac{1}{p}\right)
\left(\frac{1}{1-p^{\frac{\alpha+1}{q}-1}}
+\frac{p^{-\frac{\alpha+1}{q}}}{1-p^{-\frac{\alpha+1}{q}}}\right)&=
\int_{\mathbb{Q}_{p}^{*}}
\frac{|t|_p^{-\frac{\alpha+1}{q}}}{\max(1,|t|_p)}dt\\
&\leq\|T^p\|_{L^q(|x|_p^{\alpha}dx)\rightarrow L^q(|x|_p^{\alpha}dx)}.\label{more1}
\end{split}\end{equation}
Then \eqref{norm1} follows from \eqref{less1} and \eqref{more1}. Theorem \ref{th:thm3} is proved.
\end{proof}

\section{Boundedness of commutators of $p$-adic Hardy and Hardy-Littlewood-P\'olya operators}
The boundedness of commutators is an active topic in harmonic analysis due to its important applications, for example,
 it can be applied to characterizing some function spaces. There are a lot of works about the boundedness
of commutators of various Hardy-type operators on Euclidean spaces (cf. \cite{Fu2}, \cite{Fu3}).
In this section, we consider the boundedness of commutators of $p$-adic Hardy
and Hardy-Littlewood-P\'olya operators.

\begin{defn}
Let $b\in L_{loc}(\mathbb{Q}_{p}^{n})$. {\it The commutators of  $p$-adic Hardy
operators} are defined by
\begin{equation}
\mathcal{H}^p_{b}f=b\mathcal{H}^p f-\mathcal{H}^p(bf),\qquad
\mathcal{H}^{p,*}_{b}f=b\mathcal{H}^{p,*} f-\mathcal{H}^{p,*}(bf).
\end{equation}
\end{defn}

\begin{defn}
Let $b\in L_{loc}(\mathbb{Q}_{p}^{n})$. {\it The commutator of  $p$-adic
Hardy-Littlewood-P\'olya operator} is defined by
\begin{equation}
T^p_{b}f=bT^p f-T^p(bf).
\end{equation}
\end{defn}

In \cite{CL}, \cite{GV} and \cite{Lu}, the CMO spaces (central BMO spaces) on $\mathbb{R}^n$ have been introduced and studied.
CMO spaces bears a simple relationship with BMO: $g\in BMO$ precisely when $g$
and all of its translates belong to BMO spaces uniformly a.e.. Many precise analogies
exist between CMO spaces and BMO space from the point of view of real Hardy spaces.
Similarly, we define the
$CMO^q$ spaces on $\mathbb{Q}_{p}^{n}$.

\begin{defn}
Let $1\leq q<\infty$. A function $f\in L^q_{loc}(\mathbb{Q}_{p}^{n})$ is said to be in $CMO^q(\mathbb{Q}_{p}^{n})$,
if
\begin{equation*}
\|f\|_{CMO^q(\mathbb{Q}_{p}^{n})}:=
\sup_{\gamma\in\mathbb{Z}}\left(\frac{1}{|B_\gamma(0)|_H}
\int_{B_{\gamma(0)}}|f(x)-f_{B_{\gamma(0)}}|^qdx\right)^{\frac{1}{q}}
<\infty,
\end{equation*}
where
\begin{equation*}
f_{B_{\gamma(0)}}=\frac{1}{|B_\gamma(0)|_H}\int_{B_{\gamma(0)}}f(x)dx.
\end{equation*}
\end{defn}

\begin{rem}
It is obvious that $L^\infty(\mathbb{Q}_{p}^{n})\subset BMO(\mathbb{Q}_{p}^{n})\subset CMO^q(\mathbb{Q}_{p}^{n})$.
\end{rem}

Since Herz space is a natural generalization of Lebesgue space with power weight, we further study the boundedness
of commutators of $p$-adic Hardy
and Hardy-Littlewood-P\'olya operators on Herz space. Let us first give the definition of Herz space.

Let $B_k=B_k(0)=\{x\in\mathbb{Q}_{p}^{n}:|x|_p\leq p^{k}\}$, $S_k=B_k\setminus B_{k-1}$
and $\chi_E$ is the characteristic function of set $E$.

\begin{defn}{\rm\cite{Zh}}
Suppose $\alpha\in\mathbb{R}$, $0<q<\infty$ and $0< r<\infty$. The homogeneous $p$-adic Herz
space $K^{\alpha, q}_{r}(\mathbb{Q}_{p}^{n})$ is defined by
\begin{equation*}
K^{\alpha, q}_{r}(\mathbb{Q}_{p}^{n})=\left\{f\in L^r_{loc}(\mathbb{Q}_{p}^{n}):
\|f\|_{K^{\alpha, q}_{r}(\mathbb{Q}_{p}^{n})}<\infty\right\},
\end{equation*}
where
\begin{equation*}
\|f\|_{K^{\alpha, q}_{r}(\mathbb{Q}_{p}^{n})}=\left(\sum_{k=-\infty}^{+\infty}
p^{k\alpha q}\|f\chi_k\|^q_{ L^r(\mathbb{Q}_{p}^{n})}\right)^{\frac{1}{q}},
\end{equation*}
with the usual modifications made when $q=\infty$ or $r=\infty$.
\end{defn}
\begin{rem}\label{th:rem1}
$K^{0, q}_{q}(\mathbb{Q}_{p}^{n})$ is the generalization of $L^q(|x|^{\alpha}_{p}dx)$, and
  $K^{\frac{\alpha}{q}, q}_{q}(\mathbb{Q}_{p}^{n})= L^q(|x|^{\alpha}_{p}dx)$,
 $K^{0, q}_{q}(\mathbb{Q}_{p}^{n})= L^q(\mathbb{Q}_{p}^{n})$ for
all $0<q\leq\infty$ and $\alpha\in\mathbb{R}$.
\end{rem}

In what follows, we will not study the best estimates of the two commutators mentioned above,  instead,
we will discuss the boundedness of them.
Motivated by \cite{Fu2}, we get the following operator boundedness results. Throughout
this paper, we use $C$ to denote different positive constants which are independent of the essential variables,
and $q'$ to denote the conjugate index of $q$, i.e. $\frac{1}{q}+\frac{1}{q'}=1$.

\begin{thm}\label{th:thm2}
Let $0<q_1\leq q_2<\infty$, $1<r<\infty$
and $b\in CMO^{\max\{r',r\}}(\mathbb{Q}_{p}^{n})$. Then\\
$(1)$ if $\alpha<\frac{n}{r'}$, then
$\mathcal{H}^p_{b}$ is bounded from $K^{\alpha, q_1}_{r}(\mathbb{Q}_{p}^{n})$
to $K^{\alpha, q_2}_{r}(\mathbb{Q}_{p}^{n})$;\\
$(2)$ if $\alpha>-\frac{n}{r}$, then $\mathcal{H}^{p,*}_{b}$ is bounded from $K^{\alpha, q_1}_{r}(\mathbb{Q}_{p}^{n})$
to $K^{\alpha, q_2}_{r}(\mathbb{Q}_{p}^{n})$.
\end{thm}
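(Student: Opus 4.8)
The plan is to prove part $(1)$; part $(2)$ follows by the duality $\mathcal{H}^{p,*}_b = (\mathcal{H}^p_{\bar b})^*$ together with the symmetry in the hypotheses (the condition $\alpha < n/r'$ for $\mathcal{H}^p_b$ becoming $\alpha > -n/r$ for the adjoint after accounting for the dual weight). First I would unwind the commutator directly from its definition: for $x$ with $|x|_p = p^k$ we write
\begin{equation*}
\mathcal{H}^p_b f(x) = \frac{1}{|x|_p^n}\int_{B(0,|x|_p)}\bigl(b(x)-b(t)\bigr)f(t)\,dt,
\end{equation*}
which exhibits $b(x)-b(t)$ as the kernel factor that the $CMO$ hypothesis is designed to control. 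Since $K^{\alpha,q}_r$ has a norm built from the dyadic (here $p$-adic) annuli $S_k$, the natural strategy is to fix the annulus $S_k$ on which we measure $\mathcal{H}^p_b f$, decompose the inner integral over $B(0,|x|_p) = B_k$ into the annuli $S_j$ with $j\le k$, and estimate each piece.

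The key steps, in order, are as follows. First, for $x\in S_k$ bound $\|\mathcal{H}^p_b f\,\chi_k\|_{L^r}$ by a sum over $j\le k$ of terms of the form $p^{-kn}\|(b-b_{B_k})\chi_k\|_{L^r}\,$ times an integral of $|f|$ over $S_j$, after inserting and removing the average $b_{B_k}$ and splitting $b(x)-b(t) = (b(x)-b_{B_k}) + (b_{B_k}-b(t))$. Second, I would apply Hölder's inequality on each $S_j$ to pass from $\int_{S_j}|f|$ to $\|f\chi_j\|_{L^r}$, producing a factor $p^{jn/r'}$ from the measure of $S_j$; the two pieces of the $b$-difference are handled by Hölder with exponents $r,r'$ and the $CMO^{\max\{r,r'\}}$ norm of $b$, where the standard telescoping estimate $|b_{B_j}-b_{B_k}| \le C|k-j|\,\|b\|_{CMO}$ across nested balls supplies the only logarithmic-type growth factor $(k-j)$. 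Third, this reduces the whole estimate to controlling a discrete convolution: $p^{k\alpha}\|\mathcal{H}^p_b f\,\chi_k\|_{L^r}$ is dominated by $\sum_{j\le k} (k-j)\,p^{(j-k)(n/r' - \alpha)}\,p^{j\alpha}\|f\chi_j\|_{L^r}$. Fourth, I would sum in $k$ using Young's inequality for the sequence-space convolution, observing that the kernel $a_m = m\,p^{-m(n/r'-\alpha)}$ (with $m = k-j \ge 0$) is summable \emph{precisely} when $n/r' - \alpha > 0$, i.e. when $\alpha < n/r'$, which is exactly the stated hypothesis; the extra factor $m$ is absorbed harmlessly since geometric decay beats linear growth. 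The passage from $q_1$ to $q_2$ with $q_1\le q_2$ is handled by the embedding $\ell^{q_1}\hookrightarrow\ell^{q_2}$ after the convolution estimate in $\ell^{q_1}$, or by interpolating the endpoint $\ell^1$-to-$\ell^\infty$ bound.

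The main obstacle I expect is bookkeeping the $CMO$ oscillation correctly across the two regimes $r$ and $r'$: the term $(b(x)-b_{B_k})$ living on the outer annulus is paired via Hölder with $f$ in $L^r$, so it must be measured in $L^{r'}$, whereas the term $(b_{B_k}-b(t))$ on the inner annuli, after inserting $b_{B_j}$, requires measuring $b-b_{B_j}$ in $L^r$ against $f\in L^r$ (forcing a second Hölder pairing), and this is exactly why the hypothesis demands $b\in CMO^{\max\{r,r'\}}$ rather than a single exponent. Keeping these two Hölder pairings consistent while correctly producing the telescoping factor $(k-j)$ — and verifying that the linear factor does not spoil the summability secured by $\alpha<n/r'$ — is the delicate part; everything else is a routine annular decomposition of the Herz norm.
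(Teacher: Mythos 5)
Your argument for part $(1)$ is essentially the paper's own proof: the same decomposition of $B(0,|x|_p)=B_k$ into the annuli $S_j$, $j\le k$, the same splitting $b(x)-b(t)=(b(x)-b_{B_k})+(b_{B_k}-b(t))$, the same telescoping estimate $|b_{B_j}-b_{B_k}|\le C|k-j|\,\|b\|_{CMO^1}$ (the paper's Lemma 3.2), and the same reduction to a discrete convolution with kernel $(k-j)p^{(j-k)(n/r'-\alpha)}$, summable exactly when $\alpha<n/r'$; the paper handles the convolution by splitting into the cases $0<q_1\le 1$ (using $q_1$-subadditivity) and $q_1>1$ (using H\"older), which is what your appeal to Young's inequality amounts to, and then uses $\ell^{q_1}\hookrightarrow\ell^{q_2}$ just as you propose. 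One bookkeeping remark: the exponents in your ``main obstacle'' paragraph are swapped relative to the actual computation --- the outer factor $b(\cdot)-b_{B_k}$ is not H\"older-paired with $f$ (they live in different variables) but is absorbed by the outer $L^r(S_k,dx)$ norm, hence lands in $CMO^{r}$, while it is the inner factor $b(\cdot)-b_{B_j}$ that is paired with $f\in L^r(S_j,dt)$ and therefore must be measured in $L^{r'}$, giving $CMO^{r'}$. Since the hypothesis $b\in CMO^{\max\{r,r'\}}$ dominates both norms by Lemma 3.1, this slip is harmless.

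The genuine gap is in your treatment of part $(2)$. The theorem is stated for all $0<q_1\le q_2<\infty$, and for $0<q<1$ the Herz space $K^{\alpha,q}_{r}(\mathbb{Q}_p^n)$ is only quasi-normed: Hahn--Banach is unavailable, the quasi-norm of $\mathcal{H}^{p,*}_b f$ cannot be recovered as $\sup\{|\langle \mathcal{H}^{p,*}_b f,g\rangle| : \|g\|_{Y^*}\le 1\}$, and the identification $(K^{\alpha,q}_{r})^*=K^{-\alpha,q'}_{r'}$ that your reduction needs simply does not hold in that range. Even for $q_1,q_2\ge 1$ you would have to check that the adjoint of $K^{\alpha,q_1}_r\to K^{\alpha,q_2}_r$ boundedness produces exactly the exponent pattern $(-\alpha, q_2', q_1', r')$ covered by part $(1)$ (it does, and the condition $-\alpha<n/(r')'$ becomes $\alpha>-n/r$ as required), but that still leaves the quasi-Banach range uncovered. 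The paper avoids duality altogether: part $(2)$ is proved ``by the similar method,'' i.e.\ by repeating the direct annular argument for $\mathcal{H}^{p,*}_b$, decomposing $\mathbb{Q}_p^n\setminus B(0,|x|_p)$ into the annuli $S_j$ with $j> k$, which turns the convolution kernel into $(j-k)p^{(k-j)(n/r+\alpha)}$ and yields the condition $\alpha>-n/r$ for all $0<q_1\le q_2<\infty$. You should replace the duality step by this mirror-image computation.
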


 From Remark \ref{th:rem1}, we get the following two corollaries.

\begin{cor}
Suppose that $1<q<\infty$
and $b\in CMO^{\max\{q',q\}}(\mathbb{Q}_{p}^{n})$.  Then\\
$(1)$ if $\alpha<\frac{nq}{q'}$, then
$\mathcal{H}^p_{b}$ is bounded from $L^q(|x|_p^{\alpha}dx)$
to $L^q(|x|_p^{\alpha}dx)$;\\
$(2)$ if $\alpha>-n$, then $\mathcal{H}^{p,*}_{b}$ is bounded from $L^q(|x|_p^{\alpha}dx)$
to $L^q(|x|_p^{\alpha}dx)$.
\end{cor}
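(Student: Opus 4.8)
The plan is to derive this corollary directly from Theorem \ref{th:thm2} by specializing the Herz-space parameters, using the identification recorded in Remark \ref{th:rem1}. First I would invoke Remark \ref{th:rem1}, which asserts $K^{\alpha/q,\,q}_q(\mathbb{Q}_p^n) = L^q(|x|_p^{\alpha}dx)$. Consequently, boundedness of $\mathcal{H}^p_b$ (resp. $\mathcal{H}^{p,*}_b$) from $L^q(|x|_p^{\alpha}dx)$ to itself is literally the same statement as boundedness of the same operator from $K^{\alpha/q,\,q}_q$ to $K^{\alpha/q,\,q}_q$, so it suffices to place ourselves in the Herz-space framework of Theorem \ref{th:thm2}.

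Next I would apply Theorem \ref{th:thm2} with the choices $q_1 = q_2 = q$ and $r = q$, and with the theorem's Herz smoothness exponent set equal to $\alpha/q$. Here one must keep the notation straight: the symbol $\alpha$ appearing in Theorem \ref{th:thm2} plays the role of the Herz smoothness index, whereas in the present corollary $\alpha$ is the power-weight exponent, and the two are related by $\alpha_{\mathrm{Herz}} = \alpha/q$. Under these substitutions the structural hypotheses $0 < q_1 \leq q_2 < \infty$ and $1 < r < \infty$ collapse to the single assumption $1 < q < \infty$, and the regularity requirement $b \in CMO^{\max\{r',r\}}(\mathbb{Q}_p^n)$ becomes exactly $b \in CMO^{\max\{q',q\}}(\mathbb{Q}_p^n)$, which is the hypothesis imposed in the corollary.

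Finally I would translate the two smoothness conditions. Since $r = q$ forces $r' = q'$, part $(1)$ of the theorem requires $\alpha/q < n/r' = n/q'$, equivalently $\alpha < nq/q'$; and part $(2)$ requires $\alpha/q > -n/r = -n/q$, equivalently $\alpha > -n$. These are precisely conditions $(1)$ and $(2)$ of the corollary, so both boundedness assertions follow at once. There is no substantive obstacle beyond bookkeeping: the only point demanding care is keeping the two distinct meanings of $\alpha$ separate and verifying that the parameter inequalities convert correctly under the substitution $\alpha_{\mathrm{Herz}} = \alpha/q$, $r = q$.
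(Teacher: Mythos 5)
Your proposal is correct and is exactly the argument the paper intends: the corollary is obtained from Theorem \ref{th:thm2} by taking $q_1=q_2=r=q$ and Herz index $\alpha/q$, using the identification $K^{\alpha/q,\,q}_{q}(\mathbb{Q}_p^n)=L^q(|x|_p^{\alpha}dx)$ from Remark \ref{th:rem1}. The translation of the conditions ($\alpha/q<n/q'$ into $\alpha<nq/q'$, and $\alpha/q>-n/q$ into $\alpha>-n$) is carried out correctly.
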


\begin{cor}
Suppose that $1<q<\infty$
 and $b\in CMO^{\max\{q',q\}}(\mathbb{Q}_{p}^{n})$. Then
both $\mathcal{H}^p_{b}$ and $\mathcal{H}^{p,*}_{b}$ are bounded from $L^q(\mathbb{Q}_{p}^{n})$
to $L^q(\mathbb{Q}_{p}^{n})$.
\end{cor}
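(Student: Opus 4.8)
The plan is to obtain this statement as the unweighted specialization of Theorem \ref{th:thm2}, using the identification recorded in Remark \ref{th:rem1}. The only content here is to match the parameters of the general Herz-space result against the Lebesgue space $L^q(\mathbb{Q}_p^n)$; there is no new analysis to perform, so the proof is a short verification.

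Concretely, Remark \ref{th:rem1} gives $K^{0,q}_q(\mathbb{Q}_p^n)=L^q(\mathbb{Q}_p^n)$ for every $1<q<\infty$. Hence I take in Theorem \ref{th:thm2} the Herz index equal to $0$, the integrability exponent $r=q$, and $q_1=q_2=q$. With these choices the source and target spaces $K^{0,q_1}_r(\mathbb{Q}_p^n)$ and $K^{0,q_2}_r(\mathbb{Q}_p^n)$ both collapse to $L^q(\mathbb{Q}_p^n)$, and the structural hypotheses $0<q_1\le q_2<\infty$ and $1<r<\infty$ reduce to $1<q<\infty$, which is assumed. The smoothness hypothesis $b\in CMO^{\max\{r',r\}}(\mathbb{Q}_p^n)$ becomes exactly $b\in CMO^{\max\{q',q\}}(\mathbb{Q}_p^n)$, matching the hypothesis of the corollary.

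It remains to check the two index conditions of Theorem \ref{th:thm2}, now read with the Herz index set to $0$. For $\mathcal{H}^p_{b}$ (part $(1)$) the requirement is $0<n/r'=n/q'$, and for $\mathcal{H}^{p,*}_{b}$ (part $(2)$) it is $0>-n/r=-n/q$; both are automatic, since $n\ge 1$ and $1<q<\infty$ force $n/q'>0$ and $-n/q<0$. Therefore part $(1)$ yields boundedness of $\mathcal{H}^p_{b}$ on $L^q(\mathbb{Q}_p^n)$ and part $(2)$ yields boundedness of $\mathcal{H}^{p,*}_{b}$ on $L^q(\mathbb{Q}_p^n)$, which is the assertion. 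Equivalently, one may specialize the preceding weighted corollary to weight exponent $0$: the conditions $\alpha<nq/q'$ and $\alpha>-n$ both hold trivially when $\alpha=0$.

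Since the argument is a pure substitution of parameters, there is no genuine obstacle. The one point requiring care is notational: the symbol $\alpha$ plays two distinct roles in the excerpt, namely the Herz exponent in Theorem \ref{th:thm2} versus the power-weight exponent in the corollaries, and one must feed the value $0$ into the Herz index (not into a weight) while keeping $r=q$, so that the identity $K^{0,q}_q(\mathbb{Q}_p^n)=L^q(\mathbb{Q}_p^n)$ applies. Once this identification is made correctly, both conclusions follow immediately.
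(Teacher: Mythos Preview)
Your proof is correct and matches the paper's own approach exactly: the paper derives this corollary (and the preceding weighted one) directly from Theorem~\ref{th:thm2} via the identification $K^{0,q}_{q}(\mathbb{Q}_{p}^{n})=L^q(\mathbb{Q}_{p}^{n})$ of Remark~\ref{th:rem1}, with the parameter choices $\alpha=0$, $r=q_1=q_2=q$ that you spell out. Your verification of the index conditions $0<n/q'$ and $0>-n/q$ is exactly the triviality the paper leaves implicit.
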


Due to Theorem \ref{th:thm2}, we can also obtain the boundedness
of commutator generated by
$p$-adic Hardy-Littlewood-P\'olya operator and CMO function.

\begin{thm}\label{th:thm4}
Suppose that $0<q_1\leq q_2<\infty$, $1<r<\infty$,
$-\frac{1}{r}<\alpha<\frac{1}{r'}$ and $b\in CMO^{\max\{r',r\}}(\mathbb{Q}_{p}^{n})$. Then
 $T^p_{b}$ is bounded from $K^{\alpha, q_1}_{r}(\mathbb{Q}_{p}^{n})$
to $K^{\alpha, q_2}_{r}(\mathbb{Q}_{p}^{n})$.
\end{thm}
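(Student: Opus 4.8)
The plan is to reduce the commutator of the Hardy--Littlewood--Pólya operator to the two Hardy commutators already controlled in Theorem \ref{th:thm2}, exploiting the observation flagged in the introduction that $T^p$ splits as a sum of $\mathcal{H}^p$ and $\mathcal{H}^{p,*}$. First I would record the relevant pointwise identity. For $x\neq 0$, split the domain $\mathbb{Q}_p^*$ according to whether $|y|_p\leq|x|_p$ or $|y|_p>|x|_p$; since the radius $|x|_p$ is a power of $p$, the ball $B(0,|x|_p)$ is exactly $\{|y|_p\leq|x|_p\}$ and its complement is exactly $\{|y|_p>|x|_p\}$, so the split has neither overlap nor gap. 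On the first piece $\max(|x|_p,|y|_p)=|x|_p$ and on the second $\max(|x|_p,|y|_p)=|y|_p$, whence (with $n=1$, matching the definition of $T^p$ on $\mathbb{Q}_p$)
\begin{equation*}
T^pf(x)=\frac{1}{|x|_p}\int_{B(0,|x|_p)}f(y)\,dy+\int_{\mathbb{Q}_p\setminus B(0,|x|_p)}\frac{f(y)}{|y|_p}\,dy=\mathcal{H}^pf(x)+\mathcal{H}^{p,*}f(x),
\end{equation*}
which is precisely the sum of the two operators of Definition \ref{th:def}.

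Next, since forming the commutator with $b$ is linear in the underlying operator, the identity $T^p=\mathcal{H}^p+\mathcal{H}^{p,*}$ descends to the commutators:
\begin{equation*}
T^p_bf=b\,T^pf-T^p(bf)=\bigl(b\,\mathcal{H}^pf-\mathcal{H}^p(bf)\bigr)+\bigl(b\,\mathcal{H}^{p,*}f-\mathcal{H}^{p,*}(bf)\bigr)=\mathcal{H}^p_bf+\mathcal{H}^{p,*}_bf.
\end{equation*}
It therefore suffices to bound each summand separately on the Herz scale and add the estimates.

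I would then apply Theorem \ref{th:thm2} with $n=1$. Part $(1)$ yields boundedness of $\mathcal{H}^p_b$ from $K^{\alpha,q_1}_r(\mathbb{Q}_p^n)$ to $K^{\alpha,q_2}_r(\mathbb{Q}_p^n)$ under the hypothesis $\alpha<1/r'$, and part $(2)$ gives the same for $\mathcal{H}^{p,*}_b$ under $\alpha>-1/r$; the membership $b\in CMO^{\max\{r',r\}}(\mathbb{Q}_p^n)$ is exactly the regularity both parts require. The two range conditions combine to $-1/r<\alpha<1/r'$, which is precisely the hypothesis of Theorem \ref{th:thm4}. Finally I would invoke the (quasi-)triangle inequality in $K^{\alpha,q_2}_r(\mathbb{Q}_p^n)$---the ordinary triangle inequality when $q_2\geq1$, and $q_2$-subadditivity of the quasi-norm when $0<q_2<1$---to combine the two bounds, concluding that $T^p_b$ is bounded from $K^{\alpha,q_1}_r$ to $K^{\alpha,q_2}_r$.

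There is essentially no analytic obstacle remaining, since the whole difficulty has been front-loaded into Theorem \ref{th:thm2}; the only delicate points are the clean verification of the domain split in the pointwise identity (checking that the factor $\max(|x|_p,|y|_p)$ matches the normalizations $1/|x|_p$ and $1/|y|_p$ of the two Hardy operators on the respective regions) and the bookkeeping of which of the two range restrictions originates from which Hardy commutator. The sharp constants of Theorem \ref{th:thm1} play no role here: only the qualitative boundedness is needed.
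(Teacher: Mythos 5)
Your proposal is correct and follows essentially the same route as the paper: the paper likewise splits the kernel $1/\max(|x|_p,|y|_p)$ over $B(0,|x|_p)$ and its complement to get the pointwise bound $|T^p_bf|\leq|\mathcal{H}^p_bf|+|\mathcal{H}^{p,*}_bf|$, then combines the two Herz-norm estimates from Theorem \ref{th:thm2}. Your version is marginally more careful in stating the exact identity $T^p_b=\mathcal{H}^p_b+\mathcal{H}^{p,*}_b$ and in flagging the quasi-norm case $0<q_2<1$, but the argument is the same.
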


\begin{cor}
Suppose that $1<q<\infty$,
 $-1<\alpha<q-1$ and $b\in CMO^{\max\{q',q\}}(\mathbb{Q}_{p}^{n})$. Then
 $T^p_{b}$ and $T^{p,*}_{b}$ is bounded from $L^q(|x|_p^{\alpha}dx)$
to $L^q(|x|_p^{\alpha}dx)$.
\end{cor}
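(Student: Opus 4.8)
The plan is to read this corollary off as the power\nobreakdash-weighted Lebesgue specialization of Theorem \ref{th:thm4}, using the identification of weighted $L^q$ spaces with Herz spaces recorded in Remark \ref{th:rem1}. Concretely, Remark \ref{th:rem1} gives $L^q(|x|_p^{\alpha}dx)=K^{\alpha/q,\,q}_{q}(\mathbb{Q}_p^{n})$, so the whole task reduces to invoking Theorem \ref{th:thm4} with the Herz parameters $r=q$, $q_1=q_2=q$, and homogeneity index $\beta=\alpha/q$, and checking that the hypotheses transport correctly.

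The only genuine computation is the translation of the admissibility conditions. Under the substitution above, the two\nobreakdash-sided range $-1/r<\beta<1/r'$ of Theorem \ref{th:thm4} becomes $-1/q<\alpha/q<1/q'$; multiplying through by $q$ and using $q/q'=q-1$ turns this into exactly $-1<\alpha<q-1$, which is precisely the hypothesis of the corollary. In the same way the regularity requirement $b\in CMO^{\max\{r',r\}}(\mathbb{Q}_p^{n})$ becomes $b\in CMO^{\max\{q',q\}}(\mathbb{Q}_p^{n})$, again matching the statement verbatim. With these identifications in hand, Theorem \ref{th:thm4} delivers the boundedness of $T^p_b$ on $L^q(|x|_p^{\alpha}dx)$ immediately.

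It remains to handle the adjoint commutator $T^{p,*}_b$. Here I would exploit that the kernel $1/\max(|x|_p,|y|_p)$ of $T^p$ is symmetric in its two arguments; equivalently, since $T^p=\mathcal{H}^p+\mathcal{H}^{p,*}$ and the two $p$-adic Hardy operators are mutually adjoint, $T^p$ is self\nobreakdash-adjoint and $T^{p,*}=T^p$. Consequently $T^{p,*}_b f=bT^{p,*}f-T^{p,*}(bf)=T^p_b f$, so the estimate for $T^{p,*}_b$ is literally the one already obtained. I do not anticipate any serious obstacle in this corollary: its content lies entirely in the parameter bookkeeping of the second paragraph together with the self\nobreakdash-adjointness observation, both routine. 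The single point that deserves a moment's care is verifying that the strict range $-1<\alpha<q-1$ is the exact image of the Herz range $-1/r<\beta<1/r'$ under $\beta=\alpha/q$, so that no admissible exponent is lost or spuriously gained in passing from the Herz formulation to the weighted Lebesgue formulation.
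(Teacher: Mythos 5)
Your proposal is correct and matches the paper's (implicit) argument exactly: the corollary is obtained by specializing Theorem \ref{th:thm4} with $r=q$, $q_1=q_2=q$ and Herz index $\alpha/q$, using the identification $K^{\alpha/q,\,q}_{q}(\mathbb{Q}_p^{n})=L^q(|x|_p^{\alpha}dx)$ from Remark \ref{th:rem1}, under which $-1/q<\alpha/q<1/q'$ becomes $-1<\alpha<q-1$. Your observation that the symmetry of the kernel $1/\max(|x|_p,|y|_p)$ forces $T^{p,*}=T^p$ and hence $T^{p,*}_b=T^p_b$ is the right way to dispose of the otherwise undefined operator $T^{p,*}_b$ appearing in the statement.
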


\begin{cor}
Suppose that $1<q<\infty$
 and $b\in CMO^{\max\{q',q\}}(\mathbb{Q}_{p}^{n})$. Then
 $T^p_{b}$ is bounded from $L^q(\mathbb{Q}_{p}^{n})$
to $L^q(\mathbb{Q}_{p}^{n})$.
\end{cor}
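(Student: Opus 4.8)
The plan is to deduce this corollary directly from Theorem \ref{th:thm4} by specializing its parameters, using the identification of (unweighted) Lebesgue spaces with Herz spaces recorded in Remark \ref{th:rem1}. No new estimate is required: the statement is the diagonal, unweighted instance of the Herz-space boundedness already established.

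First I would recall from Remark \ref{th:rem1} that $K^{0,q}_{q}(\mathbb{Q}_{p}^{n})=L^q(\mathbb{Q}_{p}^{n})$ for every $0<q\leq\infty$. Consequently, to prove that $T^p_b$ is bounded from $L^q(\mathbb{Q}_{p}^{n})$ to $L^q(\mathbb{Q}_{p}^{n})$ it suffices to show that $T^p_b$ is bounded from $K^{0,q}_{q}(\mathbb{Q}_{p}^{n})$ to $K^{0,q}_{q}(\mathbb{Q}_{p}^{n})$, which is a special case of Theorem \ref{th:thm4}.

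Next I would apply Theorem \ref{th:thm4} with the choices $q_1=q_2=q$, $r=q$, and Herz exponent $\alpha=0$, and verify that these satisfy every hypothesis of that theorem. The condition $0<q_1\leq q_2<\infty$ holds because $1<q<\infty$; the condition $1<r<\infty$ holds because $r=q$ and $1<q<\infty$. The range condition $-\tfrac{1}{r}<\alpha<\tfrac{1}{r'}$ becomes $-\tfrac{1}{q}<0<\tfrac{1}{q'}$, and since $q,q'\in(1,\infty)$ both $\tfrac{1}{q}$ and $\tfrac{1}{q'}$ are strictly positive, so the inequalities hold. Finally, with $r=q$ one has $r'=q'$, hence $\max\{r',r\}=\max\{q',q\}$, so the hypothesis $b\in CMO^{\max\{q',q\}}(\mathbb{Q}_{p}^{n})$ is exactly the hypothesis $b\in CMO^{\max\{r',r\}}(\mathbb{Q}_{p}^{n})$ demanded by Theorem \ref{th:thm4}.

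With all hypotheses checked, Theorem \ref{th:thm4} yields the boundedness of $T^p_b$ from $K^{0,q}_{q}(\mathbb{Q}_{p}^{n})$ to $K^{0,q}_{q}(\mathbb{Q}_{p}^{n})$, and invoking $K^{0,q}_{q}(\mathbb{Q}_{p}^{n})=L^q(\mathbb{Q}_{p}^{n})$ once more completes the proof. There is no genuine obstacle here; the only point requiring care is confirming that the strict inequalities in the range condition for the Herz exponent $\alpha$ remain valid at the value $\alpha=0$, which is immediate because $\tfrac{1}{q}$ and $\tfrac{1}{q'}$ are both positive whenever $1<q<\infty$.
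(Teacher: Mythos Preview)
Your proposal is correct and matches the paper's intended argument: the corollary is stated immediately after Theorem \ref{th:thm4} without a separate proof, and in context it is precisely the specialization $\alpha=0$, $q_1=q_2=r=q$ of that theorem together with the identification $K^{0,q}_{q}(\mathbb{Q}_{p}^{n})=L^q(\mathbb{Q}_{p}^{n})$ from Remark \ref{th:rem1}.
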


To prove Theorem \ref{th:thm2}, we need the following lemmas.

\begin{lem}\label{th:lem1}
Suppose that $b$ is a $CMO$ function and $1\leq q<r<\infty$. Then
$CMO^r(\mathbb{Q}_{p}^{n})\subset CMO^q(\mathbb{Q}_{p}^{n})$ and
$\|b\|_{CMO^q(\mathbb{Q}_{p}^{n})}\leq\|b\|_{CMO^r(\mathbb{Q}_{p}^{n})}$.
\end{lem}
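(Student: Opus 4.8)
The plan is to prove the stated norm inequality ball by ball, reducing everything to a single application of H\"older's inequality. The content of the lemma is exactly the classical fact that, on a finite measure space, normalized $L^q$ averages are nondecreasing in $q$; here the relevant measure space is each ball $B_\gamma(0)$ equipped with the normalized Haar measure $|B_\gamma(0)|_H^{-1}dx$, and the function being averaged is the oscillation $|b-b_{B_\gamma(0)}|$. Once the inequality of norms is established, the inclusion $CMO^r(\mathbb{Q}_p^n)\subset CMO^q(\mathbb{Q}_p^n)$ is immediate.

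First I would fix an arbitrary $\gamma\in\mathbb{Z}$ and write $B=B_\gamma(0)$. The key point, which I would emphasize at the outset, is that the centering constant
$$b_{B}=\frac{1}{|B|_H}\int_{B}b(x)\,dx$$
appearing in the definitions of both $\|\cdot\|_{CMO^q}$ and $\|\cdot\|_{CMO^r}$ is the \emph{same} quantity: it is the unweighted average of $b$ over $B$ and does not depend on the exponent. Consequently I may compare the two integrands $|b-b_B|^q$ and $|b-b_B|^r$ directly, with no mismatch in the subtracted mean.

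Next I would apply H\"older's inequality to $\int_{B}|b(x)-b_B|^q\,dx$ with the conjugate pair of exponents $r/q>1$ and $(r/q)'=r/(r-q)$, writing $|b-b_B|^q=|b-b_B|^q\cdot 1$. This gives
$$\int_{B}|b(x)-b_B|^q\,dx\leq\left(\int_{B}|b(x)-b_B|^r\,dx\right)^{q/r}|B|_H^{1-q/r}.$$
Dividing by $|B|_H$, taking the $1/q$ power, and simplifying the powers of $|B|_H$ yields
$$\left(\frac{1}{|B|_H}\int_{B}|b(x)-b_B|^q\,dx\right)^{1/q}\leq\left(\frac{1}{|B|_H}\int_{B}|b(x)-b_B|^r\,dx\right)^{1/r}\leq\|b\|_{CMO^r(\mathbb{Q}_p^n)}.$$
Since the right-hand side is independent of $\gamma$, taking the supremum over $\gamma\in\mathbb{Z}$ on the left produces $\|b\|_{CMO^q(\mathbb{Q}_p^n)}\leq\|b\|_{CMO^r(\mathbb{Q}_p^n)}$, whence $CMO^r(\mathbb{Q}_p^n)\subset CMO^q(\mathbb{Q}_p^n)$.

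There is no genuine obstacle in this argument; the only things to watch are the bookkeeping of the exponents of $|B|_H$ (the factor $|B|_H^{1-q/r}$ produced by H\"older must cancel correctly against the normalizing $|B|_H^{-1}$ so that both sides become true normalized averages) and the observation that the finiteness of $|B|_H$ is precisely what forces the inequality to run in the direction $q<r$. The hypothesis that $b$ lies locally in $L^r$ guarantees $b\in L^q_{loc}$ on each ball, so every integral above is finite and all the manipulations are legitimate.
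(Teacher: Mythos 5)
Your proof is correct and follows essentially the same route as the paper's: both fix a ball $B_\gamma(0)$, apply H\"older's inequality with the conjugate pair $r/q$ and $r/(r-q)$ to the normalized average of $|b-b_{B_\gamma(0)}|^q$, observe that the powers of $|B_\gamma(0)|_H$ combine into the normalized $L^r$ average, and then take the supremum over $\gamma$. Your additional remark that the centering constant $b_{B_\gamma(0)}$ is independent of the exponent is a sensible clarification, but the argument is the same.
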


\begin{proof}
For any $b\in CMO^r(\mathbb{Q}_{p}^{n})$, by H\"{o}lder's inequality, we have
\begin{equation*}\begin{split}
&\left(\frac{1}{|B_\gamma(0)|_H}
\int_{B_{\gamma(0)}}|b(x)-b_{B_{\gamma(0)}}|^qdx\right)^{\frac{1}{q}}\\
&\leq\left\{\frac{1}{|B_\gamma(0)|_H}
\left(\int_{B_{\gamma(0)}}|b(x)-b_{B_{\gamma(0)}}|^{q\cdot\frac{r}{q}}dx\right)^{\frac{q}{r}}
\left(\int_{B_{\gamma(0)}}1dx\right)^{1-\frac{q}{r}}\right\}^{\frac{1}{q}}\\
&=\left\{\frac{1}{|B_\gamma(0)|_H}
\left(\int_{B_{\gamma(0)}}|b(x)-b_{B_{\gamma(0)}}|^{r}dx\right)^{\frac{q}{r}}
\left|B_{\gamma(0)}\right|_H^{1-\frac{q}{r}}\right\}^{\frac{1}{q}}\\
&=\left(\frac{1}{|B_\gamma(0)|_H}\int_{B_{\gamma(0)}}|b(x)-b_{B_{\gamma(0)}}|^{r}dx\right)^{\frac{1}{r}}\\
&\leq \|b\|_{CMO^r(\mathbb{Q}_{p}^{n})}
\end{split}\end{equation*}
Therefore, $b\in CMO^q(\mathbb{Q}_{p}^{n})$ and $\|b\|_{CMO^q(\mathbb{Q}_{p}^{n})}
\leq\|b\|_{CMO^r(\mathbb{Q}_{p}^{n})}$. This completes the proof.
\end{proof}

\begin{lem}\label{th:lem2}
Suppose that b is a $CMO$ function, $j,k\in\mathbb{Z}$. Then
\begin{equation}
|b(t)-b_{B_k}|\leq |b(t)-b_{B_k}|+p^n|j-k|\|b\|_{CMO^1(\mathbb{Q}_{p}^{n})}.\label{lem2}
\end{equation}
\end{lem}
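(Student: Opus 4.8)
The plan is to reduce the inequality to a telescoping estimate on the averages $b_{B_i}$ taken over the nested balls $B_i$. (One should read the left-hand side of \eqref{lem2} as $|b(t)-b_{B_j}|$; writing $b_{B_k}$ there makes the inequality trivially true and is evidently a typographical slip.) Once the intended form is in place, a single application of the triangle inequality gives
$$|b(t)-b_{B_j}| \leq |b(t)-b_{B_k}| + |b_{B_j}-b_{B_k}|,$$
so the entire lemma reduces to the numerical claim
$$|b_{B_j}-b_{B_k}| \leq p^n |j-k|\, \|b\|_{CMO^1(\mathbb{Q}_{p}^{n})}.$$

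To establish this claim, I would first treat the case of two consecutive balls. Since $B_i \subset B_{i+1}$ and $|B_{i+1}|_H = p^n |B_i|_H$ (recall $|B_\gamma(0)|_H = p^{\gamma n}$), I estimate
$$|b_{B_i}-b_{B_{i+1}}| = \left|\frac{1}{|B_i|_H}\int_{B_i}\bigl(b(x)-b_{B_{i+1}}\bigr)\,dx\right| \leq \frac{1}{|B_i|_H}\int_{B_{i+1}}|b(x)-b_{B_{i+1}}|\,dx,$$
where the domain of integration is enlarged from $B_i$ to $B_{i+1}$ using nonnegativity of the integrand. Factoring out the volume ratio $|B_{i+1}|_H/|B_i|_H = p^n$ and invoking the definition of the $CMO^1$ norm then yields $|b_{B_i}-b_{B_{i+1}}| \leq p^n \|b\|_{CMO^1(\mathbb{Q}_{p}^{n})}$.

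Next, assuming without loss of generality that $j \leq k$, I would telescope: writing $b_{B_j}-b_{B_k} = \sum_{i=j}^{k-1}(b_{B_i}-b_{B_{i+1}})$ and applying the consecutive-ball estimate to each summand gives $|b_{B_j}-b_{B_k}| \leq (k-j)\,p^n\,\|b\|_{CMO^1(\mathbb{Q}_{p}^{n})} = p^n|j-k|\,\|b\|_{CMO^1(\mathbb{Q}_{p}^{n})}$. Combining this with the triangle inequality above completes the argument.

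I do not anticipate a genuine obstacle in this lemma; the only point that requires care is the appearance of the constant $p^n$, which is precisely the multiplicative gap $|B_{i+1}|_H/|B_i|_H$ between the Haar measures of adjacent balls in $\mathbb{Q}_{p}^{n}$. This non-Archimedean jump by a full factor of $p^n$ at each radius step is exactly what forces $p^n$ into the bound, in contrast to the Euclidean setting where the analogous nested-ball comparison produces a dimension-free constant.
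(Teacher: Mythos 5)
Your proof is correct and follows essentially the same route as the paper: a consecutive-ball estimate $|b_{B_i}-b_{B_{i+1}}|\leq p^n\|b\|_{CMO^1(\mathbb{Q}_{p}^{n})}$ obtained by enlarging the domain of integration from $B_i$ to $B_{i+1}$ and absorbing the volume ratio $p^n$, followed by telescoping and the triangle inequality. You also correctly diagnose the typographical slip in the statement (one of the two $b_{B_k}$'s should be $b_{B_j}$), which is exactly what the paper's own proof establishes.
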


\begin{proof}
For $i\in\mathbb{Z}$, recall that $b_{B_i}=\frac{1}{|B_i|_H}\int_{B_i}b(x)dx$, we have
\begin{equation}\begin{split}
|b_{B_i}-b_{B_{i+1}}|&\leq\frac{1}{|B_i|_H}\int_{B_i} |b(t)-b_{B_{i+1}}|dt
\leq\frac{p^n}{|B_{i+1}|_H}\int_{B_{i+1}} |b(t)-b_{B_{i+1}}|dt\\
&\leq p^n\|b\|_{CMO^1(\mathbb{Q}_{p}^{n})}.\label{bi}
\end{split}\end{equation}

For $j,k\in\mathbb{Z}$, without loss of generality, we can assume that $j\leq k$. By \eqref{bi}
we get
\begin{equation*}\begin{split}
|b(t)-b_{B_k}|\leq|b(t)-b_{B_j}|+\sum_{i=k}^{j-1}|b_{B_i}-b_{B_{i+1}}|
\leq |b(t)-b_{B_j}|+p^n|j-k|\|b\|_{CMO^1(\mathbb{Q}_{p}^{n})}.
\end{split}\end{equation*}
The lemma is proved.
\end{proof}

\begin{proof}[Proof of Theorem \ref{th:thm2}]

Denote $f(x)\chi_i(x)=f_i(x)$. By definition
\begin{equation*}\begin{split}
\|(\mathcal{H}^p_{b}f)\chi_k\|_{L^{r}(\mathbb{Q}_{p}^{n})}^{r}
&=\int_{S_k}|x|_p^{-rn}
\left|\int_{B(0,|x|_p)}f(t)(b(x)-b(t))dt\right|^{r}dx\\
&\leq \int_{S_k}p^{-krn}
\left(\int_{B(0,p^k)}\left|f(t)(b(x)-b(t)) \right|dt\right)^{r}dx\\
&=p^{-krn}\int_{S_k}\left(
\sum_{j=-\infty}^{k}\int_{S_j}\left|f(t)(b(x)-b(t)) \right|dt\right)^{r}dx\\
&\leq Cp^{-krn}\int_{S_k}\left(
\sum_{j=-\infty}^{k}\int_{S_j}\left|f(t)(b(x)-b_{B_k}) \right|dt\right)^{r}dx\\
&\quad~ Cp^{-krn}\int_{S_k}\left(
\sum_{j=-\infty}^{k}\int_{S_j}\left|f(t)(b(t)-b_{B_k})\right|dt\right)^{r}dx\\
&:=I+II.
\end{split}\end{equation*}

Now let's estimate $I$ and $J$, respectively. For $I$, by H\"{o}lder's inequality $(\frac{1}{r}+\frac{1}{r'}=1)$, we have
\begin{equation}\begin{split}
I&=Cp^{-krn}\left(\int_{S_k}|b(x)-b_{B_k}|^{r}dx\right)\left(
\sum_{j=-\infty}^{k}\int_{S_j}\left|f(t)\right|dt\right)^{r}\\
&\leq Cp^{\frac{-krn}{r'}}\left(\frac{1}{|B_k|_H}\int_{B_k}|b(x)-b_{B_k}|^{r}dx\right)\times\\
&\quad\left\{\sum_{j=-\infty}^{k}\left(\int_{S_j}\left|f(t)\right|^{r}dt\right)^{\frac{1}{r}}
\left(\int_{S_j}dt\right)^{\frac{1}{r'}}\right\}^{r}\\\label{I}
\end{split}\end{equation}\begin{equation*}\begin{split}
&\leq C\|b\|^{r}_{CMO^{r}(\mathbb{Q}_{p}^{n})}\left\{\sum_{j=-\infty}^{k}{p^{\frac{{(j-k)n}}{r'}}}\|f_j\|_{L^{r}(\mathbb{Q}_{p}^{n})}
\right\}^{r}.
\end{split}\end{equation*}

For $II$, by Lemma \ref{th:lem2}, we get
\begin{equation*}\begin{split}
II&=Cp^{-krn}\int_{S_k}\left(
\sum_{j=-\infty}^{k}\int_{S_j}\left|f(t)(b(t)-b_{B_k})\right|dt\right)^{r}dx\\
&=Cp^{-krn}p^{kn}(1-p^{-n})\left(
\sum_{j=-\infty}^{k}\int_{S_j}\left|f(t)(b(t)-b_{B_k})\right|dt\right)^{r}\\
&\leq Cp^{\frac{-krn}{r'}}\left(
\sum_{j=-\infty}^{k}\int_{S_j}\left|f(t)(b(t)-b_{B_j})\right|dt\right)^{r}\\
&\quad+Cp^{\frac{-krn}{r'}}\|b\|^{r}_{CMO^{1}(\mathbb{Q}_{p}^{n})}\left(
\sum_{j=-\infty}^{k}(k-j)\int_{S_j}\left|f(t)\right|dt\right)^{r}\\
&:=II_1+II_2.
\end{split}\end{equation*}

For $II_1$ and $II_2$, by H\"{o}lder's inequality, we obtain
\begin{equation}\begin{split}
II_1&\leq Cp^{\frac{-krn}{r'}}\left\{\left(
\sum_{j=-\infty}^{k}\int_{S_j}\left|f(t)\right|^{r}dt\right)^{\frac{1}{r}}
\left(\int_{S_j}\left|b(t)-b_{B_j}\right|^{r'}dt\right)^{\frac{1}{r'}}\right\}^{r}\\
&\leq Cp^{\frac{-krn}{r'}}\left\{
\sum_{j=-\infty}^{k}\left\|f_j\right\|_{L^{r}(\mathbb{Q}_{p}^{n})}p^{\frac{jn}{r'}}
\left(\frac{1}{|B_j|_H}\int_{B_j}\left|b(t)-b_{B_j}\right|^{r'}dt\right)^{\frac{1}{r'}}\right\}^{r}\\
&\leq C\|b\|^{r}_{CMO^{r'}(\mathbb{Q}_{p}^{n})}\left\{
\sum_{j=-\infty}^{k}p^{\frac{(j-k)n}{r'}}\left\|f_j\right\|_{L^{r}(\mathbb{Q}_{p}^{n})}\right\}^{r}.\label{II1}
\end{split}\end{equation}
And
\begin{equation}\begin{split}
II_2&\leq Cp^{\frac{-krn}{r'}}\|b\|^{r}_{CMO^{1}(\mathbb{Q}_{p}^{n})}\left\{
\sum_{j=-\infty}^{k}(k-j)\left(\int_{S_j}\left|f(t)\right|^{r}dt\right)^{\frac{1}{r}}
\left(\int_{S_j}dt\right\}^{\frac{1}{r'}}\right\}^{r}\\
&\leq C\|b\|^{r}_{CMO^{1}(\mathbb{Q}_{p}^{n})}\left\{
\sum_{j=-\infty}^{k}(k-j)p^{\frac{(j-k)n}{r'}}\left\|f_j\right\|_{L^{r}(\mathbb{Q}_{p}^{n})}\right\}^{r}.
\label{II2}
\end{split}\end{equation}

Then \eqref{I}--\eqref{II2} together with Lemma \ref{th:lem1} imply that
\begin{equation*}\begin{split}
&\|\mathcal{H}_bf\|_{K^{\alpha,q_2}_{r}(\mathbb{Q}_{p}^{n})}
=\left(\sum_{k=-\infty}^{+\infty}
p^{k\alpha q_2}\|(U_{\beta,b}f)\chi_k\|_{L^{r}(\mathbb{Q}_{p}^{n})}^{q_2}\right)^{\frac{1}{q_2}}\\
\end{split}\end{equation*}
\begin{equation}\begin{split}
&\leq \left(\sum_{k=-\infty}^{+\infty}
p^{k\alpha q_1}\|(U_{\beta,b}f)\chi_k\|_{L^{r}(\mathbb{Q}_{p}^{n})}^{q_1}\right)^{\frac{1}{q_1}}\\
&\leq C\left(\sum_{k=-\infty}^{+\infty}
p^{k\alpha q_1}\|b\|^{q_1}_{CMO^{r}(\mathbb{Q}_{p}^{n})}\left(\sum_{j=-\infty}^{k}
{p^{\frac{{(j-k)n}}{r'}}}\|f_j\|_{L^{r}(\mathbb{Q}_{p}^{n})}
\right)^{q_1}\right)^{\frac{1}{q_1}}\\
&\quad+ C\left(\sum_{k=-\infty}^{+\infty}
p^{k\alpha q_1}\|b\|^{q_1}_{CMO^{r'}(\mathbb{Q}_{p}^{n})}\left(\sum_{j=-\infty}^{k}
{p^{\frac{{(j-k)n}}{r'}}}\|f_j\|_{L^{r}(\mathbb{Q}_{p}^{n})}
\right)^{q_1}\right)^{\frac{1}{q_1}}\\
&\quad+ C\left(\sum_{k=-\infty}^{+\infty}
p^{k\alpha q_1}\|b\|^{q_1}_{CMO^{1}(\mathbb{Q}_{p}^{n})}\left(\sum_{j=-\infty}^{k}
{(k-j)p^{\frac{{(j-k)n}}{r'}}}\|f_j\|_{L^{r}(\mathbb{Q}_{p}^{n})}
\right)^{q_1}\right)^{\frac{1}{q_1}}\label{J}
\end{split}\end{equation}
\begin{equation*}\begin{split}
&\leq C\|b\|_{CMO^{\max\{r',r\}}(\mathbb{Q}_{p}^{n})}\left(\sum_{k=-\infty}^{+\infty}
p^{k\alpha q_1}\left(\sum_{j=-\infty}^{k}
{(k-j)p^{\frac{{(j-k)n}}{r'}}}\|f_j\|_{L^{r}(\mathbb{Q}_{p}^{n})}
\right)^{q_1}\right)^{\frac{1}{q_1}}\\
&:=J.
\end{split}\end{equation*}

For the case $0<q_1\leq 1$,  since $\alpha<\frac{{n}}{r'}$, we have
\begin{equation}\begin{split}
J^{q_1}
&=C\|b\|^{q_1}_{CMO^{\max\{r',r\}}(\mathbb{Q}_{p}^{n})}\sum_{k=-\infty}^{+\infty}
p^{k\alpha q_1}\left(\sum_{j=-\infty}^{k}
{(k-j)p^{\frac{{(j-k)n}}{r'}}}\|f_j\|_{L^{r}(\mathbb{Q}_{p}^{n})}
\right)^{q_1}\\
&=C\|b\|^{q_1}_{CMO^{\max\{r',r\}}(\mathbb{Q}_{p}^{n})}\sum_{k=-\infty}^{+\infty}
\left(\sum_{j=-\infty}^{k}p^{j\alpha}\|f_j\|_{L^{r}(\mathbb{Q}_{p}^{n})}
{(k-j)p^{(j-k)(\frac{{n}}{r'}-\alpha)}}
\right)^{q_1}\\
&\leq C\|b\|^{q_1}_{CMO^{\max\{r',r\}}(\mathbb{Q}_{p}^{n})}\sum_{k=-\infty}^{+\infty}
\sum_{j=-\infty}^{k}p^{j\alpha q_1}\|f_j\|^{q_1}_{L^{r}(\mathbb{Q}_{p}^{n})}
{(k-j)^{q_1}p^{(j-k)(\frac{{n}}{r'}-\alpha)q_1}}
\\
&= C\|b\|^{q_1}_{CMO^{\max\{r',r\}}(\mathbb{Q}_{p}^{n})}\sum_{j=-\infty}^{+\infty}
p^{j\alpha q_1}\|f_j\|^{q_1}_{L^{r}(\mathbb{Q}_{p}^{n})}\sum_{k=j}^{+\infty}
{(k-j)^{q_1}p^{(j-k)(\frac{{n}}{r'}-\alpha)q_1}}
\\
&= C\|b\|^{q_1}_{CMO^{\max\{r',r\}}(\mathbb{Q}_{p}^{n})}
\|f\|^{q_1}_{K^{\alpha,r}_{r}(\mathbb{Q}_{p}^{n})}.\label{J1}
\end{split}\end{equation}

For the case $q_1>1$, by H\"{o}lder's inequality, we have
\begin{equation}\begin{split}
J^{q_1}
&\leq C\|b\|^{q_1}_{CMO^{\max\{r',r\}}(\mathbb{Q}_{p}^{n})}\sum_{k=-\infty}^{+\infty}
\left(\sum_{j=-\infty}^{k}p^{j\alpha q_1}\|f_j\|^{q_1}_{L^{r}(\mathbb{Q}_{p}^{n})}
p^{\frac{(j-k)}{2}(\frac{{n}}{r'}-\alpha)q_1}\right)\times\\
&\quad\left(\sum_{j=-\infty}^{k}{(k-j)^{q'_1}p^{\frac{(j-k)}{2}(\frac{{n}}{r'}-\alpha)q'_1}}\right)^{\frac{q_1}{q'_1}}
\\\end{split}\end{equation}\begin{equation*}\begin{split}
&= C\|b\|^{q_1}_{CMO^{\max\{r',r\}}(\mathbb{Q}_{p}^{n})}\sum_{j=-\infty}^{+\infty}
p^{j\alpha q_1}\|f_j\|^{q_1}_{L^{r}(\mathbb{Q}_{p}^{n})}
\sum_{k=j}^{+\infty}p^{\frac{(j-k)}{2}(\frac{{n}}{r'}-\alpha)q_1}\\
&= C\|b\|^{q_1}_{CMO^{\max\{r',r\}}(\mathbb{Q}_{p}^{n})}
\|f\|^{q_1}_{K^{\alpha,r}_{r}(\mathbb{Q}_{p}^{n})}.\label{J2}
\end{split}\end{equation*}
Then (1) follows from \eqref{J}, \eqref{J1} and \eqref{J2}.
 By the similar method, we can get
(2). Theorem \ref{th:thm2} is proved.
\end{proof}

\begin{proof}[Proof of Theorem \ref{th:thm4}]
By definition, we have
\begin{equation}\begin{split}
\left|T^p_{b}f\right|&=
\left|\int_{\mathbb{Q}_{p}^{*}}\frac{f(y)}{\max(|x|_p,|y|_p)}\left(b(x)-b(y)\right)dy\right|\\
&\leq\left|\int_{ B(0,|x|_p)}\frac{f(y)}{|x|_p}(b(x)-b(y))dy\right|+
\left|\int_{\mathbb{Q}_{p}^{n}\setminus B(0,|x|_p)}\frac{f(y)}{|y|_p}(b(x)-b(y))dy\right|\\
&=\left|\mathcal{H}^p_{b}f\right|+\left|\mathcal{H}^{p,*}_{b}f\right|.
\end{split}\end{equation}

By Minkowski's inequality, we get
\begin{equation}
\|T^p_{b}f\|_{K^{\alpha, q_2}_{r}(\mathbb{Q}_{p}^{n})}\leq
\|\mathcal{H}^p_{b}f\|_{K^{\alpha, q_2}_{r}(\mathbb{Q}_{p}^{n})}+
\|\mathcal{H}^p_{b}f\|_{K^{\alpha, q_2}_{r}(\mathbb{Q}_{p}^{n})}.
\end{equation}
Then Theorem \ref{th:thm4} follows from Theorem \ref{th:thm2}.
\end{proof}

\vskip0.1in\parskip=0mm \baselineskip 15pt
\renewcommand{\baselinestretch}{1.15}

\footnotesize
\parindent=6mm

\end{document}